\numberwithin{equation}{section}
\newtheorem{theorem}{Theorem}[section]
\newtheorem{lemma}{Lemma}[section]
\newtheorem{definition}{Definition}[section]
\newtheorem{remark}{Remark}[section]
\newtheorem{algorithm}{Algorithm}[section]
\newenvironment{proof}{{\textbf{Proof.}}}{\hfill \textbf{$\square$}\vspace{0.2cm}}
\newcommand{\abs}[1]{\lvert#1\rvert}
\newcommand{\norm}[1]{\lVert#1\rVert}
\newcommand{\veps}[1]{#1_{\varepsilon}}
\newcommand{\ve}{\varepsilon}
\newcommand{\vp}[2]{\dfrac{\partial #1}{\partial #2}}
\def\yt{\hat{y}}
\def\uo{u}
\def\ut{u^{(\yt,\ve)}}
\def\vo{v}
\def\vt{v^{(\yt,\ve)}}
\def\Bo{K}
\def\Bt{B_{\ve}(\yt)}
\def\omegt{\Omega\setminus(\bar{K}\cup\bar{B}_{\ve}(\hat{y}))}
\newcommand{\SBV}{{\mathit SBV}}
\DeclareMathOperator{\dist}{dist}
\DeclareMathOperator*{\argmin}{arg\,min}
\newcommand{\Gammalim}{\Gamma\text{-}\lim}
\newcommand{\Gammaliminf}{\Gamma\text{-}\liminf}
\newcommand{\Gammalimsup}{\Gamma\text{-}\limsup}
\newcommand{\field}[1]{\ensuremath{\mathbb{#1}}}
\newcommand{\R}{\field{R}}
\newcommand{\N}{\field{N}}
\newcommand{\Sp}{\mathbb{S}}
\newcommand{\X}{X}
\newcommand{\U}{U}
\newcommand{\logmessage}[1]{\@latex@warning{#1}}
\newcommand{\ignore}{\logmessage{Text ignored}\@gobble}
\begin{document}

\vspace{0.2cm}
\begin{center}
{\textbf{An Approach to the Minimization of the Mumford--Shah Functional
using $\Gamma$-convergence and Topological Asymptotic Expansion}} \\[0.5cm]
{\small Markus Grasmair,$^{1,2}$ Monika Muszkieta,$^3$ and Otmar Scherzer$^{1,4}$\\[0.2cm]
${}^1$Computational Science Center\\
University of Vienna\\
Nordbergstr.~15, 1090 Wien, Austria\\[0.2cm]
${}^2$Faculty for Mathematics and Geography\\
Catholic University Eichst\"att--Ingolstadt\\
Ostenstr.~26, 85072 Eichst\"att, Germany\\[0.2cm]
${}^3$Institute of Mathematics and Computer Science \\
Wroclaw University of Technology \\
ul. Wybrzeze Wyspianskiego 27, 50-370 Wroclaw, Poland \\[0.2cm]
${}^4$RICAM\\
Austrian Academy of Sciences\\
Altenbergerstr.~69, 4040 Linz, Austria\\[0.2cm]
markus.grasmair@univie.ac.at \\
monika.muszkieta@pwr.wroc.pl \\
otmar.scherzer@univie.ac.at\\[0.2cm]
\today
}
\end{center}
\vspace{0.3cm}

\begin{abstract}
In this paper, we present a method for the numerical
minimization of the Mumford--Shah functional that is based on
the idea of topological asymptotic expansions.
The basic idea is to cover the expected edge set
with balls of radius $\ve > 0$ and use the number of balls, multiplied with
$2\ve$, as an estimate for the length of the edge set.
We introduce a functional based on this idea and prove that it converges
in the sense of $\Gamma$-limits to the Mumford--Shah functional.
Moreover, we show that ideas from topological asymptotic analysis
can be used for determining where to position the balls covering the edge set.
The results of the proposed method are presented by means of two numerical examples
and compared with the results of the classical approximation due to Ambrosio and Tortorelli.

\end{abstract}

\vspace{0.2cm}

\noindent
{\bf Keywords:} Topological asymptotic expansion; $\Gamma$-convergence; Mumford-Shah functional; image segmentation.\\[0.2cm]
{\bf AMS Classification:} 35R35, 65K10, 49M25.


\section{Introduction}

Let $\Omega$ be a Lipschitz bounded open domain in $\mathbb{R}^2$.
We assume that a possibly noisy image on $\Omega$ is given,
represented by a real-valued, bounded function $f$ on $\Omega$,
whose values $f(x)$, $x \in \Omega$, correspond to the intensity of $f$ at the pixel $x$.
In order to segment, and denoise at the same time, the image $f$,
Mumford and Shah~\cite{MumSha89} introduced a variational model,
which is based on the assumptions that the different ``objects''
in the image give rise to homogeneous regions that are separated
by the objects' projected silhouettes.
Moreover, these silhouettes in general correspond to discontinuities in the image $f$.
By this reasoning, they proposed to minimize the functional
\begin{equation}\label{functional_MS}
  F(u,K) = \frac{1}{2}\int_\Omega (u-f)^2\,dx
  + \frac{\alpha}{2}\int_{\Omega\setminus K} \lvert\nabla u\rvert^2\,dx
  + \beta\mathcal{H}^1(K)\,,
\end{equation}
taking as variables the function $u \in L^2(\Omega)$,
the denoised image, and the compact set $K \subset \Omega$,
the set of edges or silhouettes.
Here, the value $F(u,K)$ is set to $+\infty$ if the restriction of
  the weak derivative $\nabla u$ of $u$ to $\Omega\setminus K$ is not square-integrable.
Here, $\mathcal{H}^1(K)$ denotes the one-dimensional Hausdorff measure
of the set $K$; in the case $K$ is a regular (rectifiable) one-dimensional set,
this is precisely its length.
The parameters $\alpha$ and $\beta$ that appear in \eqref{functional_MS} are positive constants
determining the weight that is put on the regularity of
the denoised image $u$ and the length of the edge set $K$.

In order to show the existence of minimizers of the Mumford--Shah functional,
a weak formulation depending only on one variable has been introduced by
De Giorgi et al.~\cite{DeGCarLea89}.
In their model, the set $K$ is replaced by the jump set $S_u$ of the function $u \in SBV(\Omega)$,
the space of special function of bounded variation on $\Omega$.
Still, this reformulation of the functional provides no method
for the actual numerical computation of minimizers.
Thus, various approximations of the functional $F$ have been proposed,
most of them based on the concept of $\Gamma$-convergence.
Ambrosio and Tortorelli \cite{AmbTor90} proposed a variational model in which they replace the set $S_u$
by a continuous function $v$ with values close to $0$ near $S_u$, and values close to $1$ away from $S_u$.
For other approximations of the Mumford--Shah functional in the sense of $\Gamma$-convergence,
we refer to Braides et al.~\cite{BraDal97}, where the authors propose approximations by a family of non-local functionals.
Approximations by finite-difference schemes,
inspired by the original discrete model of Blake and Zisserman~\cite{Blake1987},
have been considered by Chambolle~\cite{Chambolle1995,Chambolle1999a},
and by finite-elements schemes by Chambolle and Dal Maso~\cite{Chambolle1999b}
or recently by Aubert et al.~\cite{AubBlaMar06}.
In the work by Koepfler et al.~\cite{Koepfler1994} and Dal Maso et al.~\cite{DalMaso1992},
region growing and merging methods have been proposed.
For detailed analysis of the Mumford--Shah model,
we refer reader to the book by Morel and Solimini \cite{Morel1995}.
We also refer to the books \cite{Aubert2006,Scherzer2010,SchGraGroHalLen09},
where some of the above mentioned results are shortly discussed.

In the following, we will show how the problem of minimizing the
Mumford--Shah functional $F$ can be approached using ideas of
topological asymptotic analysis.
In its original formulation (see, e.g., Soko{\l}owski et al.~\cite{Sok99},
Garreau et al.~\cite{Gar00}, Feij\'oo et al.~\cite{Fei03}),
this theory investigates a variation of a given objective functional
depending on some domain in $\R^n$ with respect to
the subtraction of a small ball from this domain.
This variation is a scalar function, called the topological gradient or the topological derivative,
and its largest negative values indicate positions, where it is good to remove a small ball.
In \cite{Ams06}, Amstutz proposed to modify the definition of
topological gradient and provide the variation of a given functional with
respect to change of certain material properties, and not a domain
topology. The topological derivative of an objective functional
  has also been considered by Giusti et
  al.~\cite{Giusti08} in the case
  of nonlinear elasticity.
Recently, topological asymptotic analysis has been also applied by
Auroux et al.~\cite{Aur07,Aur09} and by Muszkieta \cite{Mus09} to
various problems in image processing.


In order to apply the theory of topological asymptotic expansions,
it is necessary that the functional to be minimized
depends solely on the set $K$.
This can be achieved in the case of the Mumford--Shah functional
by noting that the minimizing pair $(u_0,K_0)$ is uniquely determined
by either of the components $u_0$ and $K_0$:
The set $K_0$ coincides with the jump set $S_{u_0}$ of the function $u_0$.
Conversely, $u_0$ can be computed from $K_0$ by solving the
partial differential equation
\begin{equation}\label{eq:MS_PDE}
  \begin{aligned}
    u - \alpha \Delta u & = f && \text{ in } \Omega\,,\\
    \frac{\partial u}{\partial n} & = 0 &&\text{ in } \partial (\Omega \setminus K_0)\;.
  \end{aligned}
\end{equation}
Now consider the functional
\[
J(K) = F\bigl(u(K),K\bigr)\,,
\]
where $u(K)$ denotes the solution of~\eqref{eq:MS_PDE}
with $K_0$ replaced by $K$.
Then the pair $(u(K_0),K_0)$ minimizes the functional $F$,
if and only if $J(K_0) \le J(K)$ for all $K \subset \Omega$.

The idea is now to use a
gradient descent like approach to the minimization of the functional $J$.
Starting from an initial guess $K$ of the edge set
(for instance $K = \emptyset$),
one adds to $K$ those points, whose inclusion would
lead to a near to maximal decrease of the cost functional $J$.
More precisely, one adjoins to the set $K$ small balls of radius $\ve > 0$
centered at the points $x \in \Omega \setminus K$ and
tries to compute an asymptotic expansion of the form
\[
J\bigl(K\cup B_\ve(x)\bigr) - J(K) = c(\ve) G_K(x) + o\bigl(c(\ve)\bigr)
\]
for some functions $c\colon \R_{>0} \to \R_{>0}$ and $G_K\colon \Omega\setminus K \to \R$.
Those $x \in \Omega\setminus K$ for which $G_K$ attains the largest negative values
are then added to the set $K$.
This process is iterated, until the function $G_K$ becomes non-negative everywhere.

In the case of the Mumford--Shah functional,
this procedure cannot applied directly,
because the functional is infinite whenever $K$ contains
a set of positive Lebesgue measure.
We therefore propose to use a different, though related,
functional for the computation of the asymptotic expansion,
which is based on an approximation of the one-dimensional Hausdorff measure:
The number of balls of radius $\ve > 0$ that are required
to cover a set $K$, multiplied with $2\ve$, provides a good estimate
of $\mathcal{H}^1(K)$ for $\ve$ small enough and $K$ sufficiently regular.
In the following, we introduce this approximating functional $J_{\ve,\kappa}$.
However, because we later prove the $\Gamma$-convergence of this functional
to $F$, it is necessary to let $J_{\ve,\kappa}$ depend on two functions,
the function $u$ and a piecewise constant edge indicator function $v$:

For each finite set $Y \subset \R^2$ and each $0 < \kappa < 1$
we define the function $v_{Y,\kappa}\colon \Omega \to \R$ by
\[
v_{Y,\kappa}(x) := \begin{cases}
\kappa & \text{ if } x \in \bigcup_{y \in Y} B_\ve(y)\,,\\
1 & \text{ else.}
\end{cases}
\]
For every $v \in L^2(\Omega)$ we define
\[
m_{\ve,\kappa}(v) := \inf\bigl\{\mathcal{H}^0(Y) : Y \subset \R^2,\ v = v_{Y,\kappa}\bigr\}\;.
\]
Here we set $m_{\ve,\kappa}(v) := +\infty$, if $v\neq v_{Y,\kappa}$
for any $Y \subset \R^2$.
Note that, for a given function $v$, there might exist different sets
$Y \subset \R^2$ such that $v = v_{Y,\kappa}$.

Finally, we introduce the family of functionals $J_{\ve,\kappa}\colon L^2(\Omega)\times L^2(\Omega)\to[0,+\infty]$, defined by
\begin{equation}
\label{functional_Je}
J_{\ve,\kappa}(u,v):= \frac{1}{2}\int_{\Omega}(u-f)^2\,dx
+\frac{\alpha}{2}\int_{\Omega} v \lvert\nabla u\rvert^2 \,dx
+ 2 \beta\ve\, m_{\ve,\kappa}(v)
\end{equation}
if $u\in H^1(\Omega)$, and $J_\ve(u,v):=+\infty$ otherwise.

In Section~\ref{sec:topological}, we derive an approximation of
the functional $J_{\ve,\kappa}$ that allows us to compute an approximation
of the minimizer using the idea of topological asymptotic expansions.
We show that
\[
J_{\ve,\kappa}(u,v_{Y\cup\{y\},\kappa}) - J_{\ve,\kappa}(u,v_{Y,\kappa})
\approx -\ve^2\pi \alpha\frac{1-\kappa}{1+\kappa}\abs{\nabla u(y)}^2+2\ve\beta\;.
\]
(cf.~Theorem~\ref{thm:Gasymptotic}).

In Section~\ref{sec:gamma}, we show that the functional $J_{\ve,\kappa}$ is indeed
an approximation of the Mumford--Shah functional in the sense of $\Gamma$-convergence.
More precisely, if we choose $\kappa = \kappa(\ve)$ in such a way
that $\kappa(\ve) = o(\ve)$ as $\ve \to 0$, then
\[
F = \Gammalim_{\ve \to 0} J_{\ve,\kappa(\ve)}\;.
\]
In particular, this implies that the minimizers of $J_{\ve,\kappa(\ve)}$ converge to a minimizer of $F$ as $\ve$ tends to zero. The adopted proof is based on the methods used for proving the $\Gamma$-convergence of the Ambrosio--Tortorelli approximation of $F$ (see~\cite{AmbTor90,Bra98}).
Finally, in Section~\ref{sec:algorithm}, we propose an algorithm to minimize the functional $J_{\ve,\kappa}$. We compare numerical results obtained with this algorithm with results obtained by minimization of the Ambrosio and Tortorelli model~\cite{AmbTor90}.

The present paper therefore provides both an approximation of the
Mumford--Shah functional and a concrete numerical method for its
approximate minimization. We note that numerical methods based on topological
analysis have recently been applied to image procedding problems
like edge detection (see~\cite{Aur07,Aur09,Mus09}). The cited papers,
however, do not note the connection to the existing variational
methods, which, as this paper shows, is very close indeed.


\section{Topological Asymptotic Analysis}
\label{sec:topological}

In this section, we derive the topological asymptotic expansion of the functional
$J_{\ve,\kappa}$ defined in \eqref{functional_Je}.
This expansion will form the basis of our numerical approach.

We assume that the function $f\in H^1(\Omega)$ is given and define the functional $G\colon L^2(\Omega)\times L^2(\Omega)\to[0,+\infty]$ by
\begin{equation*}
G(\tilde{u},\tilde{v}):= \frac{1}{2}\int_{\Omega}(\tilde{u}-f)^2\,dx
+\frac{\alpha}{2}\int_{\Omega} \tilde{v} \abs{\nabla \tilde{u}}^2 \,dx
\end{equation*}
if $\tilde{u}\in H^1(\Omega)$ and $\norm{\tilde{v}}_\infty < \infty$,
and $G(\tilde{u},\tilde{v}):=+\infty$ otherwise.

Now assume that $K$ is an open subset of $\Omega$ and $0 < \kappa < 1$
satisfying $\alpha\kappa < 1$.
We define the function $v\colon\Omega\to\R$ by
\[
v(x) =
\begin{cases}
\kappa &\text{ if } x \in K,\\
1 &\text{ else.}
\end{cases}
\]

Using standard methods of variational calculus one can show that
the mapping $\tilde{u} \mapsto G(\tilde{u},v)$ attains a unique
minimizer in $H^1(\Omega)$, which we denote by $u$.

The main result of this section is stated in the following theorem:
\begin{theorem}
\label{thm:Gasymptotic}
Let $K\subset\Omega$. For $\yt\in\Omega\setminus\bar{K}$ and $\ve > 0$ define
the functions
\[
\vt(x) :=
\begin{cases}
\kappa &\text{if } x \in K\cup B_{\ve}(\yt),\\
1 &\text{ else,}
\end{cases}
\]
and
\[
\ut := \argmin_{\tilde{u}\in H^1(\Omega)} G(\tilde{u},\vt)\,.
\]
Then for all compact subsets $L \subset \Omega\setminus \bar{K}$ we have
\[
 \sup_{\yt\in L}
\Big\lvert G(u^{(\yt,\ve)},v^{(\yt,\ve)})- G(u,v)
+ \alpha \ve^2 \pi \dfrac{1-\kappa}{1+\kappa}\abs{\nabla
  u(\yt)}^2\Big\rvert =
O(\ve^{5/2})\;.
\]
\end{theorem}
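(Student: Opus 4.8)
The plan is to reduce the statement to the asymptotics of a single integral over $\Bt$. Since $u$ and $\ut$ minimise the quadratic functionals $G(\cdot,v)$ and $G(\cdot,\vt)$, they satisfy the weak Euler--Lagrange equations $\int_\Omega u\phi+\alpha\int_\Omega v\,\nabla u\cdot\nabla\phi=\int_\Omega f\phi$ and its analogue for $(\ut,\vt)$, for every $\phi\in H^1(\Omega)$. For $\ve$ so small that $\Bt\subset\omego$ (possible since $\dist(L,\partial(\omego))>0$ for the compact set $L$), one has $\vt-v=-(1-\kappa)\chi_{\Bt}$, so subtracting the two weak equations shows that $w_\ve:=\ut-u$ solves
\[
a_\ve(w_\ve,\phi)=\alpha(1-\kappa)\int_{\Bt}\nabla u\cdot\nabla\phi\,dx\qquad\text{for all }\phi\in H^1(\Omega),
\]
where $a_\ve(\varphi,\psi):=\int_\Omega\varphi\psi+\alpha\int_\Omega\vt\,\nabla\varphi\cdot\nabla\psi$ is the bilinear form of $G(\cdot,\vt)$. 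Expanding this quadratic functional about its minimiser gives $G(u,\vt)=G(\ut,\vt)+\tfrac12 a_\ve(w_\ve,w_\ve)$, and since $G(u,\vt)-G(u,v)=-\tfrac{\alpha(1-\kappa)}2\int_{\Bt}\abs{\nabla u}^2\,dx$, I obtain the \emph{exact} identity
\[
G(\ut,\vt)-G(u,v)=-\frac{\alpha(1-\kappa)}2\int_{\Bt}\nabla u\cdot\nabla\ut\,dx .
\]
It therefore suffices to show $\int_{\Bt}\nabla u\cdot\nabla\ut\,dx=\tfrac{2\pi\ve^2}{1+\kappa}\abs{\nabla u(\yt)}^2+o(\ve^2)$, uniformly in $\yt\in L$.

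On a fixed neighbourhood of $L$ one has $v\equiv1$, so there $u$ solves $u-\alpha\Delta u=f$ with $f\in H^1$; interior elliptic regularity together with the Sobolev embedding in dimension two shows that $\nabla u$ is bounded and Hölder continuous near $L$, with constants depending only on $\dist(L,\partial(\omego))$ and not on $\yt$. In particular $\nabla u(x)=p+O(\abs{x-\yt}^\gamma)$ with $p:=\nabla u(\yt)$, and $\norm{\nabla u}_{L^2(\Bt)}=O(\ve)$, uniformly in $\yt\in L$. This allows me to replace $\nabla u$ by the constant $p$ inside $\Bt$ at the cost of an $o(\ve^2)$ error in the integral above.

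The core of the argument is an inner expansion of $\ut$. I would introduce the canonical transmission solution $H$ on $\R^2$ for a unit disk of conductivity $\kappa$ surrounded by conductivity $1$, with far field $p\cdot z$; explicitly $H(z)=\tfrac{2}{1+\kappa}\,p\cdot z$ for $\abs{z}\le1$ and $H(z)=p\cdot z+\tfrac{1-\kappa}{1+\kappa}\,\tfrac{p\cdot z}{\abs{z}^2}$ for $\abs{z}\ge1$. Rescaling $z=(x-\yt)/\ve$ and subtracting the affine part yields the corrector $\theta_\ve(x):=\ve\,(H(z)-p\cdot z)$, whose gradient equals the constant $\tfrac{1-\kappa}{1+\kappa}p$ inside $\Bt$ and decays like $\ve^2/\abs{x-\yt}^2$ outside. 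Multiplying $\theta_\ve$ by a fixed cut-off $\eta$ supported in a ball $B_R(\yt)\subset\omego$ to obtain an admissible competitor, I would test the equation for $r_\ve:=\ut-u-\eta\theta_\ve$ against itself. The transmission equation satisfied by $H$ is designed precisely so that the principal part of $a_\ve(\eta\theta_\ve,\cdot)$ reproduces the right-hand side $\alpha(1-\kappa)\int_{\Bt}\nabla u\cdot\nabla(\cdot)$ from the first step, up to contributions from (i) the Hölder deviation of $\nabla u$ from $p$, (ii) the commutator terms where $\nabla\eta\ne0$, which live in an annulus on which $\theta_\ve$ and $\nabla\theta_\ve$ are $O(\ve^2)$, and (iii) the lower-order term $\int\eta\theta_\ve\,(\cdot)$. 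Coercivity of $a_\ve$, uniform in $\ve$ and $\yt$, then yields $\norm{r_\ve}_{H^1(\Omega)}=o(\ve)$.

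With the remainder estimate in hand I can evaluate the target integral. Inside $\Bt$ the corrector has the constant gradient $\tfrac{1-\kappa}{1+\kappa}p$, so $\nabla\ut=\nabla u+\tfrac{1-\kappa}{1+\kappa}p+\nabla r_\ve$ there. Using $\nabla u=p+O(\abs{x-\yt}^\gamma)$ to replace $\nabla u$ by $p$ with $o(\ve^2)$ error, and bounding the cross term by $\norm{\nabla u}_{L^2(\Bt)}\,\norm{\nabla r_\ve}_{L^2(\Bt)}=O(\ve)\,o(\ve)=o(\ve^2)$, I obtain
\[
\int_{\Bt}\nabla u\cdot\nabla\ut\,dx=\pi\ve^2\abs{p}^2\Big(1+\frac{1-\kappa}{1+\kappa}\Big)+o(\ve^2)=\frac{2\pi\ve^2}{1+\kappa}\abs{\nabla u(\yt)}^2+o(\ve^2),
\]
which combined with the exact identity of the first step gives the claimed expansion. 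I expect the main obstacle to be the \emph{uniform} corrector estimate: because $H(z)-p\cdot z$ decays only like $1/\abs{z}$, the cut-off and the slowly decaying tail must be arranged so that $r_\ve$ is genuinely $o(\ve)$ in energy, and every constant (interior regularity, coercivity of $a_\ve$, choice of $\eta$) must be controlled by $\dist(L,\partial(\omego))$ alone, so that the resulting error is $o(\ve^2)$ uniformly over $\yt\in L$.
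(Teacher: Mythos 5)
Your proposal is correct, and after the first reduction it takes a genuinely different route from the paper. The exact identity $G(\ut,\vt)-G(\uo,\vo)=-\tfrac{\alpha(1-\kappa)}{2}\int_{\Bt}\langle\nabla\uo,\nabla\ut\rangle\,dx$, which you obtain by expanding the quadratic functional about its minimizer, is precisely the reduction the paper reaches in \eqref{Gasymp0}--\eqref{Gasymp1} by combining the two Euler--Lagrange equations, and your profile $H$ is (up to normalization) the paper's transmission solution \eqref{explicit_phi}. The two arguments diverge in how they expand the interaction integral $\int_{\Bt}\langle\nabla\uo,\nabla\ut\rangle\,dx$: the paper argues pointwise through potential theory --- the Neumann function of $w\mapsto w-\alpha\Delta w$ on $\omego$, the duality/interpolation estimate of Lemma~\ref{H_1_norm_K}, the layer-potential bounds of Lemmas~\ref{lemma1}--\ref{lemma3}, the jump relation \eqref{ujump}, and the Vogelius--Volkov expansion of $\ut$ on $\partial\Bt$ (Lemma~\ref{volkov_vogelius}) inserted into a Green's-formula decomposition --- whereas you argue in the energy norm, testing the weak equation for $w_\ve=\ut-\uo$ against the cut-off corrector $\eta\theta_\ve$, whose flux-jump condition makes $a_\ve(\eta\theta_\ve,\cdot)$ reproduce $\alpha(1-\kappa)\int_{\Bt}\langle p,\nabla(\cdot)\rangle\,dx$ exactly; the three error sources you identify are each $o(\ve)\norm{\cdot}_{H^1(\Omega)}$ uniformly in $\yt\in L$, so the coercivity constant $\min(1,\alpha\kappa)$, independent of $\ve$ and $\yt$, gives $\norm{r_\ve}_{H^1(\Omega)}=o(\ve)$, and Cauchy--Schwarz then yields the uniform $o(\ve^2)$ expansion asserted in the theorem. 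The obstacle you flag (the $1/\abs{z}$ tail of $H-p\cdot z$) is in fact benign: after rescaling, $\theta_\ve$ and $\nabla\theta_\ve$ are $O(\ve^2)$ at fixed distance from $\yt$, and $\norm{\theta_\ve}_{L^2(B_R(\yt))}=O\bigl(\ve^2\sqrt{\abs{\ln\ve}}\bigr)=o(\ve)$, so the cut-off commutators and the zeroth-order term are harmless. As for what each approach buys: yours is more elementary and self-contained --- no Green's function, no jump formulas, no appeal to Vogelius--Volkov --- and uniformity over $L$ falls out of interior regularity (Lemma~\ref{le:C1}) and the $\yt$-independent coercivity; the paper's route additionally gives pointwise control of $\ut-\uo$ on $\partial\Bt$ and an explicit $O(\ve^{5/2})$ remainder, though with the Lipschitz bound \eqref{contu} your energy argument also delivers a quantitative rate, of order $\ve^3$ up to a logarithmic factor.
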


\begin{remark}
  We note that a very similar result has been derived in~\cite{Ams06}.
  The setting there, however, is slightly different (it amounts more or
  less to the case $K = \emptyset$). In addition, our result not only
  provides the asymptotics of the difference $G(u^{(\yt,\ve)},v^{(\yt,\ve)})- G(u,v)$
  but also the asymptotic size of the error term.
\end{remark}

For the remaining part of this section we assume that the compact
set $L \subset \Omega\setminus \bar{K}$ is fixed.
Moreover we define
\[
\delta := \frac{1}{3} \dist(L,\partial\Omega\cup\bar{K})
= \frac{1}{3}\min\bigl\{\abs{x-y} : x \in L,\ y \in \partial\Omega \cup\bar{K}\bigr\}\;.
\]
In addition, we consider the set
\[
\hat{L} := L + \bar{B}_\delta(0)
:= \bigl\{x+y : x \in L,\ \abs{y} \le \delta\bigr\}\;.
\]

Before we give the proof of the Theorem~\ref{thm:Gasymptotic},
we need to introduce some auxiliary results.
First, we recall that the assumptions that $u$ and $u^{(\hat{y},\ve)}$
are minimizers of $G(\cdot,v)$ and $G(\cdot,v^{(\hat{y},\ve)})$,
respectively, imply that
\begin{equation}\label{eq:phi}
\begin{aligned}
\int_\Omega (u^{(\hat{y},\ve)}-f)\varphi
+ \alpha v^{(\hat{y},\ve)} \langle \nabla u^{(\hat{y},\ve)},\nabla \varphi\rangle \,dx & = 0\,,\\
\int_\Omega (u-f)\varphi + \alpha v \langle \nabla u,\nabla \varphi\rangle \,dx & = 0\,,
\end{aligned}
\end{equation}
for all $\varphi \in H^1(\Omega)$.

We first we need a regularity result for the function $u$.

\begin{lemma}\label{le:C1}
The function $u$ satisfies
\[u \in C^{1,\lambda}_{\textrm{loc}}(\Omega\setminus \bar{K})\]
for all $\lambda\in(0,1)$. Moreover, there exists a constant $C_1$ only depending on $L$, $K$ and $\lambda$
such that
\[
\norm{\nabla u}_{L^\infty(\hat{L})} \le C_1\norm{f}_{H^1(\Omega)}\;.
\]
\end{lemma}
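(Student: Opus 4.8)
The plan is to use that $v \equiv 1$ on the open set $\Omega\setminus\bar K$, so that there $u$ solves a constant-coefficient elliptic equation, and then to apply \emph{purely interior} regularity on the set $\hat L$, which by construction sits at a fixed positive distance from $\partial\Omega\cup\bar K$. The jump of $v$ across $\partial K$ never enters, because we only ever estimate $u$ on $\hat L$.

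First I would establish a global $H^1$ bound on $u$. Since $u$ minimizes $G(\cdot,v)$, we have $G(u,v)\le G(0,v)=\frac12\int_\Omega f^2\,dx$. Dropping the nonnegative fidelity term and using $v\ge\kappa$ gives $\frac{\alpha\kappa}{2}\int_\Omega\abs{\nabla u}^2\,dx\le\frac12\int_\Omega f^2\,dx$, while from $\frac12\int_\Omega(u-f)^2\,dx\le\frac12\int_\Omega f^2\,dx$ one obtains $\norm{u}_{L^2(\Omega)}\le 2\norm{f}_{L^2(\Omega)}$. Hence $\norm{u}_{H^1(\Omega)}\le C\norm{f}_{L^2(\Omega)}\le C\norm{f}_{H^1(\Omega)}$ with $C$ depending only on $\alpha$ and $\kappa$.

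Next I would localise and apply interior elliptic regularity. Because $K$ is open, restricting the test functions in \eqref{eq:phi} to $\varphi\in H^1(\Omega)$ supported in $\Omega\setminus\bar K$, where $v\equiv 1$, shows that $u$ is a weak solution of $u-\alpha\Delta u=f$ in $\Omega\setminus\bar K$. Since $f\in H^1(\Omega)$, interior regularity for this constant-coefficient operator gives $u\in H^3_{\text{loc}}(\Omega\setminus\bar K)$, and the two-dimensional Sobolev embedding $H^3\hookrightarrow C^1$ yields $u\in C^1_{\text{loc}}(\Omega\setminus\bar K)$, the first assertion. For the quantitative bound I would use the nested domains coming from $\delta=\frac13\dist(L,\partial\Omega\cup\bar K)$: the set $\hat L=L+\bar B_\delta(0)$ has distance at least $2\delta$ from $\partial\Omega\cup\bar K$, so one may interpose $W$ with $\hat L\subset\subset W\subset\subset\Omega\setminus\bar K$, for instance $W=\hat L+B_{\delta/2}(0)$. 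The interior estimate then reads $\norm{u}_{H^3(\hat L)}\le C\bigl(\norm{f}_{H^1(W)}+\norm{u}_{L^2(W)}\bigr)$, with $C$ depending only on $\alpha$ and on $\dist(\hat L,\partial W)$, hence only on $L$ and $K$. Combining this with the $H^1$ bound of the previous step and with the embedding constant of $H^3(\hat L)\hookrightarrow C^1(\hat L)$ gives $\norm{\nabla u}_{L^\infty(\hat L)}\le C_1\norm{f}_{H^1(\Omega)}$.

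The only point requiring care --- the ``main obstacle'' --- is that global regularity of $u$ fails, since $v$ jumps across $\partial K$ and $u$ is in general no smoother than $H^1$ there; one must therefore work exclusively with interior estimates on domains compactly contained in $\Omega\setminus\bar K$, and track that the resulting constants depend only on the separation $\delta$ (hence on $L$ and $K$) and on the fixed parameters $\alpha,\kappa$. The gain of exactly two derivatives (from $H^1$ data to an $H^3$ solution) is what the hypothesis $f\in H^1$ buys us, and it is precisely what is needed for $C^1$ regularity in dimension two.
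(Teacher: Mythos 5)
Your proof is correct and follows essentially the same route as the paper's: interior elliptic regularity on $\Omega\setminus\bar{K}$ (where $v\equiv 1$) gives $u\in H^3_{\text{loc}}(\Omega\setminus\bar{K})$, and the two-dimensional Sobolev embedding $H^3\hookrightarrow C^1$ yields both assertions. In fact you supply a detail the paper leaves implicit --- the global energy bound $\norm{u}_{L^2(\Omega)}\le 2\norm{f}_{L^2(\Omega)}$ and the intermediate domain $W$, which are needed to turn the interior estimate on $\hat{L}$ into a bound by $\norm{f}_{H^1(\Omega)}$ alone.
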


\begin{proof}
Because by assumption $f \in H^1(\Omega)$
and the function $v$ is constant on $\Omega \setminus K$,
it follows from standard theorems on the regularity
of solutions of elliptic equations that
$u \in H^3_{\text{loc}}(\Omega\setminus \bar{K})$
(see~\cite[Thm.~8.10]{GilTru01}). Moreover,
there exists a constant $c_1$ only depending on $\hat{L}$
(and therefore on $L$ and $K$) such
that $\norm{u}_{H^3(\hat{L})} \le c_1\norm{f}_{H^1(\Omega)}$.
Then, the Sobolev embedding theorem~\cite[Thm.~5.4]{Ada75}
implies that $u \in C^{1,\lambda}_{\text{loc}}(\Omega\setminus \bar{K})$
for all $\lambda\in(0,1)$, and consequently,
$$\norm{\nabla u}_{L^\infty(\hat{L})} \le \norm{u}_{L^\infty(\hat{L})}+\norm{\nabla u}_{L^\infty(\hat{L})}\leq c_2 \norm{u}_{H^3(\hat{L})} \le c_1 c_2\norm{f}_{H^1(\Omega)}$$
with the constant $c_2$ depending on $\hat{L}$ and $\lambda$.
\end{proof}

As a second step, we need $H^1$-norm and $L^2$-norm estimates of the difference $\ut-\uo$.
First we show that the $H^1$-norm of the difference $\ut-\uo$ on the whole
domain $\Omega$ is of order $\ve$.

\begin{lemma}
\label{H_1_norm}
There exists a constant $C_2 > 0$ only depending on $L$, $\kappa$,
$\Omega$, and $K$ such that for all $\ve > 0$ with $\ve < \delta$
and $\hat{y} \in L$ the estimate
\[
\norm{\ut-\uo}_{H^1(\Omega)}\leq C_2\norm{f}_{H^1(\Omega)}\ve
\]
holds.
\end{lemma}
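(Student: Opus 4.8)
The plan is to derive an energy estimate for the difference $w := \ut-\uo$ directly from the two weak formulations collected in~\eqref{eq:phi}. First I would subtract the two identities. Writing the flux difference as $\vt\nabla\ut - \vo\nabla\uo = \vt\nabla w + (\vt-\vo)\nabla\uo$, this yields, for every $\varphi\in H^1(\Omega)$,
\[
\int_\Omega w\varphi + \alpha\vt\langle\nabla w,\nabla\varphi\rangle\,dx = -\alpha\int_\Omega(\vt-\vo)\langle\nabla\uo,\nabla\varphi\rangle\,dx\,.
\]
The crucial observation is that $\vt$ and $\vo$ differ only on the ball $\Bt$, where $\vt-\vo=\kappa-1$ (since $\yt\in\Omega\setminus\bar{K}$, so $\vo=1$ there). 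Hence the right-hand side collapses to an integral over $\Bt$ alone, a set of Lebesgue measure $\pi\ve^2$. This localization is the whole point of the argument.

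Next I would test the identity with $\varphi=w$. On the left I would bound from below using $\vt\ge\kappa$ together with the standing assumption $\alpha\kappa<1$, giving
\[
\int_\Omega w^2 + \alpha\vt\abs{\nabla w}^2\,dx \ge \alpha\kappa\,\norm{w}_{H^1(\Omega)}^2\,.
\]
On the right I would apply the Cauchy--Schwarz inequality on $\Bt$,
\[
\alpha(1-\kappa)\int_{\Bt}\langle\nabla\uo,\nabla w\rangle\,dx \le \alpha(1-\kappa)\,\norm{\nabla\uo}_{L^2(\Bt)}\,\norm{\nabla w}_{L^2(\Omega)}\,.
\]
Here Lemma~\ref{le:C1} enters decisively: since $\yt\in L$ and $\ve<\delta$, the ball $\Bt$ lies inside $\hat{L}$, so $\norm{\nabla\uo}_{L^2(\Bt)}\le\norm{\nabla\uo}_{L^\infty(\hat{L})}\,\abs{\Bt}^{1/2}\le C_1\norm{f}_{H^1(\Omega)}\sqrt{\pi}\,\ve$, with $C_1$ independent of $\yt\in L$.

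Combining the two bounds and dividing through by $\norm{w}_{H^1(\Omega)}$ (the estimate being trivial when $w=0$) gives
\[
\norm{w}_{H^1(\Omega)} \le \frac{(1-\kappa)\sqrt{\pi}\,C_1}{\kappa}\,\norm{f}_{H^1(\Omega)}\,\ve\,,
\]
which is the claim with $C_2=(1-\kappa)\sqrt{\pi}\,C_1/\kappa$.

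I do not expect a genuine obstacle; once the source term is localized to $\Bt$ the computation is routine. The one point deserving care is the \emph{uniformity} in $\yt$: every constant in the chain must be independent of $\yt\in L$, and this is precisely what Lemma~\ref{le:C1} supplies, since its $L^\infty$ gradient bound holds on the fixed neighborhood $\hat{L}$ that contains all the balls $\Bt$ with $\yt\in L$ and $\ve<\delta$. The blow-up of $C_2$ as $\kappa\to0$ is expected and harmless, because $\kappa$ is held fixed throughout this section.
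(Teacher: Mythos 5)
Your proof is correct and follows essentially the same route as the paper: subtracting the two weak formulations in~\eqref{eq:phi} to localize the source term to $\Bt$, testing with $w=\ut-\uo$, using the coercivity bound $\alpha\kappa\norm{w}_{H^1(\Omega)}^2$ (valid under the standing assumption $\alpha\kappa<1$), and invoking Lemma~\ref{le:C1} to get $\norm{\nabla u}_{L^2(\Bt)}\le C_1\sqrt{\pi}\norm{f}_{H^1(\Omega)}\ve$, arriving at the identical constant $C_2=(1-\kappa)\sqrt{\pi}C_1/\kappa$. Your explicit attention to the uniformity in $\yt\in L$ and to why the ball avoids $\bar{K}$ (via $\ve<\delta$) only makes explicit what the paper leaves implicit.
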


\begin{proof}
Computing the difference between the two equations in~\eqref{eq:phi}
and using the definition of $v^{(\hat{y},\ve)}$, we obtain that
\begin{multline*}
\int_{\Omega}(\ut-\uo)\varphi\,dx+\alpha\int_{\Omega}\vt \langle\nabla(\ut-\uo),\nabla \varphi\rangle\,dx\\
= \alpha(1-\kappa)\int_{\Bt}\langle\nabla\uo,\nabla\varphi\rangle\,dx
\end{multline*}
for all $\varphi \in H^1(\Omega)$.
In particular, it follows with $\varphi = u^{(\hat{y},\ve)}-u$ that
\begin{multline*}
\alpha\kappa\norm{u^{(\hat{y},\ve)}-u}_{H^1(\Omega)}^2
\le \alpha(1-\kappa)\int_{\Bt}\langle\nabla\uo,\nabla(u^{(\hat{y},\ve)}-u)\rangle\,dx\\
\le \alpha(1-\kappa)\norm{\nabla u}_{L^2(B_\ve(\hat{y}))} \norm{u^{(\hat{y},\ve)}-u}_{H^1(\Omega)}\;.
\end{multline*}
Moreover Lemma~\ref{le:C1} implies that
\begin{equation}
\label{nablauL2norm}
\norm{\nabla u}_{L^2(B_\ve(\hat{y}))}
\le \norm{\nabla u}_{L^\infty(B_\ve(\hat{y}))} \sqrt{\pi} \ve
\le \norm{\nabla u}_{L^\infty(\hat{L})} \sqrt{\pi}\ve
\le C_1\sqrt{\pi}\norm{f}_{H^1(\Omega)} \ve\;.
\end{equation}
Setting $C_2 := (1-\kappa)\sqrt{\pi}C_1/\kappa$,
the assertion follows.
\end{proof}

\begin{lemma}
\label{H_1_norm_K}
There exists a constant $C_3$ only depending on $L$, $\kappa$, $\Omega$, $K$, and $\norm{f}_{H^1(\Omega)}$
such that
\[
\norm{\ut-\uo}_{L^2(\Omega)} \le C_3 \ve^{3/2}
\]
for every $\hat{y} \in L$ and $0 < \ve < \delta$.
\end{lemma}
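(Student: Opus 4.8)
The plan is to upgrade the $H^1$-estimate of Lemma~\ref{H_1_norm} to the sharper $L^2$-estimate by a duality argument in the spirit of Aubin--Nitsche. Write $w := \ut-\uo$ and recall from the proof of Lemma~\ref{H_1_norm} that $w$ satisfies
\[
\int_\Omega w\varphi\,dx + \alpha\int_\Omega\vt\langle\nabla w,\nabla\varphi\rangle\,dx = \alpha(1-\kappa)\int_{\Bt}\langle\nabla\uo,\nabla\varphi\rangle\,dx
\]
for all $\varphi\in H^1(\Omega)$. I denote by $a(\cdot,\cdot)$ the symmetric bilinear form on the left-hand side, which is bounded (since $0<\vt\le 1$) and coercive with constant at least $\min\{1,\alpha\kappa\}$ (since $\vt\ge\kappa$), so that the identity above reads $a(w,\varphi)=\alpha(1-\kappa)\int_{\Bt}\langle\nabla\uo,\nabla\varphi\rangle\,dx$.

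First I would introduce the adjoint state $z\in H^1(\Omega)$, the unique (by Lax--Milgram, as $w\in L^2(\Omega)$) solution of $a(z,\varphi)=\int_\Omega w\varphi\,dx$ for all $\varphi\in H^1(\Omega)$. Testing this with $\varphi=w$, using the symmetry of $a$ and then the equation for $w$ with $\varphi=z$, yields the key identity
\[
\norm{w}_{L^2(\Omega)}^2 = a(z,w) = a(w,z) = \alpha(1-\kappa)\int_{\Bt}\langle\nabla\uo,\nabla z\rangle\,dx \le \alpha(1-\kappa)\,\norm{\nabla\uo}_{L^2(\Bt)}\,\norm{\nabla z}_{L^2(\Bt)}\;.
\]
Since $\norm{\nabla\uo}_{L^2(\Bt)}\le C_1\sqrt{\pi}\norm{f}_{H^1(\Omega)}\ve$ by~\eqref{nablauL2norm}, the whole estimate reduces to proving the bound $\norm{\nabla z}_{L^2(\Bt)}\le C\norm{f}_{H^1(\Omega)}\ve^2$.

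This last bound is the main obstacle: $z$ solves a transmission problem whose coefficient $\vt$ jumps across the small, $\ve$-dependent circle $\partial\Bt$, so $z$ is not uniformly smooth near the ball, and the naive estimate $\norm{\nabla z}_{L^2(\Bt)}\le\norm{z}_{H^1(\Omega)}\le C\norm{w}_{L^2(\Omega)}$ only gives order $O(1)$, far from the required $O(\ve^2)$. To extract the gain I would compare $z$ with the ``ball-free'' state $\bar z\in H^1(\Omega)$, defined as the minimizer of the same elliptic problem as $\uo$ but with data $w$ in place of $f$, that is, with the coefficient $v$ carrying no additional ball. Because $w\in H^1(\Omega)$ by Lemma~\ref{H_1_norm} and $v\equiv 1$ on the fixed compact set $\hat L\subset\Omega\setminus\bar K$, Lemma~\ref{le:C1} applies verbatim to $\bar z$ and gives $\norm{\nabla\bar z}_{L^\infty(\hat L)}\le C_1\norm{w}_{H^1(\Omega)}$, hence $\norm{\nabla\bar z}_{L^2(\Bt)}\le C_1\sqrt{\pi}\norm{w}_{H^1(\Omega)}\ve$. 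Subtracting the equations for $z$ and $\bar z$ shows that $z-\bar z$ satisfies an equation of exactly the same form as $w$, namely $a(z-\bar z,\varphi)=\alpha(1-\kappa)\int_{\Bt}\langle\nabla\bar z,\nabla\varphi\rangle\,dx$; testing with $\varphi=z-\bar z$ and repeating the energy argument of Lemma~\ref{H_1_norm} yields $\norm{z-\bar z}_{H^1(\Omega)}\le\frac{1-\kappa}{\kappa}\norm{\nabla\bar z}_{L^2(\Bt)}$. Combining the two, $\norm{\nabla z}_{L^2(\Bt)}\le C\norm{w}_{H^1(\Omega)}\ve$, and inserting the bound $\norm{w}_{H^1(\Omega)}\le C_2\norm{f}_{H^1(\Omega)}\ve$ of Lemma~\ref{H_1_norm} gives the desired $\norm{\nabla z}_{L^2(\Bt)}\le C\norm{f}_{H^1(\Omega)}\ve^2$.

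Substituting back into the key identity gives $\norm{w}_{L^2(\Omega)}^2\le C\norm{f}_{H^1(\Omega)}^2\ve^3$, that is $\norm{\ut-\uo}_{L^2(\Omega)}\le C_3\ve^{3/2}$ with $C_3$ depending only on $L,\kappa,\Omega,K$ and $\norm{f}_{H^1(\Omega)}$, as claimed. The one delicate point to watch is the uniformity in $\yt\in L$ and $0<\ve<\delta$: this is guaranteed because for all such $\yt$ and $\ve$ the ball $\Bt$ lies in the single fixed compact set $\hat L\subset\Omega\setminus\bar K$ (thanks to the factor $1/3$ in the definition of $\delta$), so that the constant $C_1$ of Lemma~\ref{le:C1} and the coercivity constant of $a$ may be chosen independently of $\yt$ and $\ve$. (I note that, instead of the $L^\infty$-bound, it would in fact suffice to control $\nabla\bar z$ in $L^4(\hat L)$ and estimate $\norm{\nabla\bar z}_{L^2(\Bt)}$ by H\"older's inequality, using only $w\in L^2$; since here $w\in H^1$, invoking Lemma~\ref{le:C1} directly is the most economical route.)
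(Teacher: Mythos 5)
Your proof is correct, but it takes a genuinely different route from the paper's. The paper also argues by duality, but globally rather than with a single dual state: it introduces the solutions $w^{(\yt,\ve)}$, $w$ of the same two problems with an arbitrary datum $g$, $\norm{g}_{H^1(\Omega)} \le 1$, in place of $f$, exploits the self-adjointness identity $\int_\Omega (\ut-\uo)g\,dx = \int_\Omega (w^{(\yt,\ve)}-w)f\,dx$ together with a polarization trick to reduce everything to ``diagonal'' terms of the form \eqref{ineq:GG}, and thereby obtains the negative-norm bound $\norm{\ut-\uo}_{H^{-1}(\Omega)} \le c(\norm{f}_{H^1(\Omega)}^2+1)\ve^2$; the conclusion then follows from the Hilbert-scale interpolation inequality $\norm{\cdot}_{L^2}^2 \le \norm{\cdot}_{H^{-1}}\norm{\cdot}_{H^1}$ (citing Krein--Petunin) combined with Lemma~\ref{H_1_norm}. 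You instead run Aubin--Nitsche with the single dual datum $w = \ut-\uo$ itself, and extract the full power $\ve^3$ as a product of two localized gradient bounds, $\norm{\nabla\uo}_{L^2(\Bt)} = O(\ve)$ and $\norm{\nabla z}_{L^2(\Bt)} = O(\ve^2)$, the latter proved by comparing $z$ with the unperturbed dual state $\bar z$ and applying Lemma~\ref{le:C1} with datum $w$ --- a legitimate step, since the constant there depends only on the geometry and the operator, not on the datum, and it is uniform in $\yt \in L$, $0<\ve<\delta$ as you note. What your route buys is self-containedness: no $H^{-1}$ norm, no appeal to interpolation theory, no polarization bookkeeping; the price is one extra elliptic comparison and a second application of the regularity lemma. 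Your closing parenthetical is also a genuine simplification: with $H^2$ interior regularity for $L^2$ data and the embedding $H^2 \hookrightarrow W^{1,4}$ in two dimensions, H\"older's inequality gives $\norm{\nabla\bar z}_{L^2(\Bt)} \le C\norm{w}_{L^2(\Omega)}\ve^{1/2}$, whence $\norm{w}_{L^2(\Omega)}^2 \le C\norm{w}_{L^2(\Omega)}\ve^{3/2}$ and the lemma follows without invoking Lemma~\ref{H_1_norm} at all. Both your argument and the paper's rest on the same two pillars --- the $C^1$ bound of Lemma~\ref{le:C1} and an energy estimate --- and deliver the same rate $\ve^{3/2}$.
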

\begin{proof}
Let $g\in H^1(\Omega)$ satisfy $\norm{g}_{H^1(\Omega)}\leq 1$ and assume that
$w^{(\hat{y},\ve)}$ and $w$ are the unique solutions to the equations (\ref{eq:phi})
but with given $g$ instead of $f$, that is
\begin{equation}\label{eq:ww}
\begin{aligned}
\int_\Omega (w^{(\hat{y},\ve)}-g)\varphi
+ \alpha v^{(\hat{y},\ve)} \langle \nabla w^{(\hat{y},\ve)},\nabla \varphi\rangle \,dx & = 0\,,\\
\int_\Omega (w-g)\varphi + \alpha v \langle \nabla w,\nabla \varphi\rangle \,dx & = 0\,,
\end{aligned}
\end{equation}
for all $\varphi \in H^1(\Omega)$.

Taking $\varphi = w^{(\hat{y},\ve)}$ and $\varphi =w$ in the first and the second equation in (\ref{eq:phi}), respectively, and next subtracting these equations from the corresponding equations in (\ref{eq:ww}) with $\varphi = u^{(\hat{y},\ve)}$ and $\varphi = u$, we obtain
\begin{equation*}
\begin{aligned}
\int_\Omega w^{(\hat{y},\ve)}\,f\,dx-\int_\Omega u^{(\hat{y},\ve)}\,g\,dx & = 0\,,\\
\int_\Omega w\, f\,dx-\int_\Omega u\,g\,dx & = 0\,.
\end{aligned}
\end{equation*}
In particular,
\begin{equation*}
\int_\Omega (u^{(\hat{y},\ve)}-u)\,g\,dx=\int_\Omega (w^{(\hat{y},\ve)}-w)\,f\,dx\,.
\end{equation*}
Next, we note that
\begin{multline}\label{aaa}
\int_{\Omega} (u^{(\hat{y},\ve)}-u)\,g\,dx
=\dfrac{1}{2}\left(\int_\Omega(u^{(\hat{y},\ve)}-u)\,g\,dx
+\int_\Omega (w^{(\hat{y},\ve)}-w)\,f\,dx\right)\\
=\dfrac{1}{2}\int_\Omega \left((u^{(\hat{y},\ve)}+w^{(\hat{y},\ve)})-(u+w)\right)\,(f+g)\,dx \\
-\dfrac{1}{2}\int_\Omega(u^{(\hat{y},\ve)}-u)\,f\,dx
-\dfrac{1}{2}\int_\Omega (w^{(\hat{y},\ve)}-w)\,g\,dx\,.
\end{multline}

Computing the difference of the two equations in~\eqref{eq:phi}
with $\varphi = u$ and $\varphi = u^{(\hat{y},\ve)}$, respectively,
we obtain
\begin{multline*}
\int_\Omega (\ut-\uo)\,f\,dx=\alpha\left(\int_{\Omega}\vo\langle\nabla\uo,\nabla \ut\rangle\,dx-\int_{\Omega}\vt\langle\nabla\uo,\nabla \ut\rangle\,dx\right)\\
\begin{aligned}
&=\alpha(1-\kappa)\int_{\Bt}\langle\nabla\uo,\nabla \ut\rangle \,dx\\
&=\alpha(1-\kappa)\left(\int_{\Bt}\langle\nabla\uo,\nabla (\ut-\uo)\rangle \,dx+\int_{\Bt}\abs{\nabla\uo}^2 \,dx \right)\,.
\end{aligned}
\end{multline*}
Application of the Cauchy-Schwarz inequality  to the above formula, and next, the estimate (\ref{nablauL2norm}) and Lemma \ref{H_1_norm} yields the inequality
\begin{equation}\label{ineq:GG}
\begin{aligned}
\Big\lvert \int_\Omega &(\ut-\uo)\,f\,dx \Big\rvert \\
&\leq \alpha(1-\kappa)\left(\norm{\nabla u}_{L^2(B_\ve(\hat{y}))} \norm{u^{(\hat{y},\ve)}-u}_{H^1(\Omega)}+ \norm{\nabla u}_{L^2(B_\ve(\hat{y}))}^2\right)\\
&\leq\alpha(1-\kappa)
\left(\sqrt{\pi} C_1 C_2+\pi C_1^2\right)\norm{f}_{H^1(\Omega)}^2\ve^2\,.
\end{aligned}
\end{equation}
In a similar manner, using the assumption that $\norm{g}_{H^1(\Omega)} = 1$,
we can show that
\begin{equation}
\label{ineq:GG1}
\begin{aligned}
\Big\lvert \int_\Omega &(w^{(\hat{y},\ve)}-w)g\,dx \Big\rvert
\leq \alpha(1-\kappa)
\left(\sqrt{\pi} C_1 C_2+\pi C_1^2\right)\ve^2\,,
\end{aligned}
\end{equation}
and
\begin{multline}
\label{ineq:GG2}
\Big\lvert \int_\Omega \bigl((u^{(\hat{y},\ve)}+w^{(\hat{y},\ve)})-(u+w)\bigr)(f+g)\,dx \Big\rvert\\
\leq \alpha(1-\kappa)\left(\sqrt{\pi}C_1 C_2+\pi C_1^2\right)
2(\norm{f}_{H^1(\Omega)}^2+1)\ve^2\,.
\end{multline}
Finally, combining \eqref{ineq:GG}, \eqref{ineq:GG1} and \eqref{ineq:GG2} with \eqref{aaa}, we obtain
\begin{equation*}
\begin{split}
\Big\lvert\int_{\Omega} (u^{(\hat{y},\ve)}&-u)g\,dx\Big\rvert\\
&\leq \frac{3\alpha(1-\kappa)}{2}
\left(\sqrt{\pi} C_1 C_2+\pi C_1^2\right)(\norm{f}_{H^1(\Omega)}^2+1)\ve^2\,.
\end{split}
\end{equation*}

Therefore, we have
\begin{multline}\label{eq:Hmone_estimate}
\norm{u^{(\hat{y},\ve)}-u}_{H^{-1}(\Omega)}
:= \sup\Bigl\{\Bigl\lvert\int_{\Omega} (u^{(\hat{y},\ve)}-u)g \,dx\Bigr\rvert :
g \in H^1(\Omega),\ \norm{g}_{H^1(\Omega)} \le 1\Bigr\}\\
\le c(\norm{f}_{H^1(\Omega)}^2 + 1)\ve^2
\end{multline}
with $c = 3 \alpha(1-\kappa)\left(\sqrt{\pi} C_1 C_2+\pi C_1^2\right)/2$.
Now, estimates from the theory of Hilbert scales
(see~\cite[Thm.~9.4]{KrePet66}), Lemma~\ref{H_1_norm}, and~\eqref{eq:Hmone_estimate}
imply that
\begin{equation*}
\begin{split}
\norm{u^{(\hat{y},\ve)}-u}_{L^2(\Omega)}^2
&\le \norm{u^{(\hat{y},\ve)}-u}_{H^{-1}(\Omega)}\norm{u^{(\hat{y},\ve)}-u}_{H^1(\Omega)}\\
&\le c(\norm{f}_{H^1(\Omega)}^2 + 1)\ve^2\, C_2\norm{f}_{H^1(\Omega)}\ve\;.
\end{split}
\end{equation*}
Therefore, the desired estimate holds with
$C_3=\sqrt{c(\norm{f}_{H^1(\Omega)}^2 + 1)C_2\norm{f}_{H^1(\Omega)}}$.
\end{proof}

In the next step, we need to derive an estimate for the function $\ut$ on the boundary $\partial\Bt$.
To do this, we follow Vogelius et al.~\cite{VogVol00}, where such estimate has been derived for the solution of the homogeneous Helmholtz equation with Dirichlet boundary conditions.

We first introduce the Green function corresponding to the equation $u-\alpha\Delta u=f$ on $\Omega\setminus\bar{K}$
with Neumann boundary conditions. That is, the function $N(\cdot,y)$ solves the problem
\begin{equation*}
  \begin{cases}
    N(x,y)-\alpha \Delta_x N(x,y)=\delta_y(x) & x\in\Omega\setminus\bar{K}\,,\\[0.2cm]
    \vp{N}{n}(x,y)=0 & x\in\partial(\Omega\setminus \bar{K})\,,\\
  \end{cases}
\end{equation*}
for $y\in\Omega$. We note that $N(\cdot,y)$ can be written as the sum of the fundamental solution
corresponding to the equation $u-\alpha\Delta u=\delta_y$, denoted by $\Gamma(\cdot,y)$,
and a corrector function $h(\cdot,y)$, which is chosen in such a way that the normal derivative
of $N(\cdot,y)$ vanishes on the boundary $\partial(\Omega\setminus \bar{K})$.
The function $\Gamma(\cdot,y)$ is given by
\[
\Gamma(x,y)=\frac{1}{2\pi}K_0\left(\dfrac{1}{\sqrt{\alpha}}|x-y|\right)
\]
for all $x$, $y\in\mathbb{R}^2$, such that $x\neq y$.
Here $K_0$ denotes the modified Bessel function of the second kind (see, e.g., \cite[p.~490]{Pol02}).
Furthermore, the function $K_0$ has an asymptotic expansion of the form
\begin{equation*}
K_0(z)=-\ln z + \ln 2-\gamma +O(z^2\abs{\ln z})
\end{equation*}
for $z\to 0$, where $\gamma$ denotes the Euler-Mascheroni constant  (see, e.g., \cite[Ch.~51]{Spa87}).
Therefore, we conclude that $\Gamma(\cdot,y)$ can be approximated as
\begin{equation}
\label{fundamental_approx}
\Gamma(x,y)=\frac{1}{2\pi}\Bigl(-\ln(|x-y|)+\dfrac{1}{2}\ln\alpha+\ln 2-\gamma\Bigr)+O(\abs{x-y}^2\abs{\ln\abs{x-y}})\,,
\end{equation}
when $\abs{x-y}\to 0$. Moreover, we observe that $\Gamma(\cdot,y)$ has the same singular behavior as
the fundamental solution of the Laplace equation
\begin{equation*}
\Phi(x,y)=-\frac{1}{2\pi}\ln(|x-y|)
\end{equation*}
defined for all $x$, $y\in\mathbb{R}^2$, such that $x\neq y$.
We need the approximation \eqref{fundamental_approx} in order to be able to apply standard methods of potential theory
to derive an estimation for the function $\ut$ on $\partial\Bt$. Such way of proceeding is
common when dealing with problems of this kind (see, e.g., Colton and Kress \cite{Col83,Col98}).

\begin{lemma}
\label{lemma1}
There exists a constant $C_4$ only depending on $L$, $K$, and $\Omega$, such that for every point
$\hat{y}\in L$, $0<\ve<\delta$, and $y\in\Omega$ satisfying $\ve<\abs{y-\hat{y}}<2\ve$ the estimate
\[
\norm{N(\cdot,y)}_{L^2(\veps{B})}\leq C_4 \ve\abs{\ln\ve}
\]
holds.
\end{lemma}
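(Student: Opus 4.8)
The plan is to split the Neumann Green function into its singular and regular parts, $N(\cdot,y) = \Gamma(\cdot,y) + h(\cdot,y)$, estimate the two $L^2(\Bt)$-norms separately, and observe that the logarithmic singularity of $\Gamma$ produces exactly the factor $\ve\abs{\ln\ve}$, while the corrector $h$ contributes only at order $\ve$. Note first that, although $\abs{y-\yt} > \ve$ forces $y \notin \bar{\Bt}$ so that $N(\cdot,y)$ is in fact regular on $\bar{\Bt}$, the point $y$ may lie arbitrarily close to $\partial\Bt$, so the estimate still requires care near that part of the boundary.

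I would begin by controlling the corrector uniformly in $y$. Since $\yt \in L$ and $\ve < \delta$, both the ball $\Bt$ and every admissible $y$ (with $\abs{y-\yt} < 2\ve < 2\delta$) lie in the compact set $L + \bar B_{2\delta}(0)$, which by the definition of $\delta$ stays at distance at least $3\delta - 2\delta = \delta$ from $\partial\Omega\cup\bar{K}$. The corrector $h(\cdot,y)$ solves the homogeneous equation $h - \alpha\Delta h = 0$ in $\Omega\setminus\bar{K}$ with Neumann data $-\partial_n\Gamma(\cdot,y)$ on $\partial(\Omega\setminus\bar{K})$; because $y$ remains at distance $\ge\delta$ from that boundary, $\partial_n\Gamma(\cdot,y)$ is bounded uniformly in $y$, and standard potential-theoretic and elliptic-regularity arguments (as in the references \cite{VogVol00,Col98}) yield a uniform interior bound $\norm{h(\cdot,y)}_{L^\infty(\Bt)} \le C$ with $C$ depending only on $L$, $K$, $\Omega$. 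Hence $\norm{h(\cdot,y)}_{L^2(\Bt)} \le C\sqrt{\pi}\,\ve = O(\ve)$.

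For the singular part I would use the expansion \eqref{fundamental_approx}, which gives the pointwise bound $\abs{\Gamma(x,y)} \le C\bigl(1 + \abs{\ln\abs{x-y}}\bigr)$ for $x,y$ in the compact region above. Since $\abs{y-\yt} < 2\ve$, one has $\Bt \subset B_{3\ve}(y)$, so, in polar coordinates centred at the singularity $y$,
\[
\int_{\Bt}\abs{\Gamma(x,y)}^2\,dx \le C\int_{B_{3\ve}(y)}\bigl(1+\abs{\ln\abs{x-y}}\bigr)^2\,dx = 2\pi C\int_0^{3\ve}(1-\ln r)^2\, r\,dr\,.
\]
The elementary primitives of $r$, $r\ln r$ and $r(\ln r)^2$ show this integral is $O(\ve^2(\ln\ve)^2)$, the leading contribution coming from $\int_0^{3\ve} r(\ln r)^2\,dr \sim \tfrac{9}{2}\ve^2(\ln\ve)^2$. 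Taking square roots gives $\norm{\Gamma(\cdot,y)}_{L^2(\Bt)} = O(\ve\abs{\ln\ve})$.

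Combining the two bounds by the triangle inequality yields $\norm{N(\cdot,y)}_{L^2(\Bt)} \le \norm{\Gamma(\cdot,y)}_{L^2(\Bt)} + \norm{h(\cdot,y)}_{L^2(\Bt)} \le C_4\,\ve\abs{\ln\ve}$ for $\ve$ small, since the $O(\ve)$ corrector term is absorbed into $O(\ve\abs{\ln\ve})$; the constant $C_4$ depends only on $L$, $K$, $\Omega$, as claimed. The main obstacle is the uniform-in-$y$ bound on the corrector $h$: this is precisely where the hypothesis $\ve<\delta$ and the distance $\delta$ to $\partial\Omega\cup\bar K$ are indispensable, guaranteeing that the singularity $y$ never approaches the boundary so that the data defining $h$ stays uniformly controlled. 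The computation for $\Gamma$ is then a routine polar-coordinate estimate.
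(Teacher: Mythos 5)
Your proposal is correct and follows essentially the same route as the paper: both split $N(\cdot,y)=\Gamma(\cdot,y)+h(\cdot,y)$, bound the smooth corrector $h$ uniformly (using that $\hat y\in L$, $\ve<\delta$ keep the singularity and the ball $\Bt$ at distance $\ge\delta$ from $\partial\Omega\cup\bar K$) so that $\norm{h(\cdot,y)}_{L^2(\Bt)}=O(\ve)$, and then obtain the $\ve\abs{\ln\ve}$ factor from a polar-coordinate integration of the logarithmic singularity via \eqref{fundamental_approx}, finishing with the triangle (Minkowski) inequality. Your write-up is in fact somewhat more explicit than the paper's on the two points the paper leaves to the reader, namely the uniform-in-$y$ control of $h$ and the polar-coordinate computation on $\Bt\subset B_{3\ve}(y)$.
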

\begin{proof}
Since by elliptic regularity $h\in C^{\infty}(\Omega\setminus K,\Omega\setminus\bar{K})$
and
\[
\dist(\hat{y},\Omega\setminus K) \ge 3\delta > 3\ve\,,
\]
we have
\[
\norm{h(\cdot,y)}_{L^2(\Bt)}\leq \sqrt{\pi} \norm{h(\cdot,y)}_{L^{\infty}(\Omega\setminus\bar{K})} \ve \leq \sqrt{\pi} c_1 \ve
\]
with $c_1 > 0$ only depending on $L$, $K$, and $\Omega$.
The estimate $\norm{\Phi(\cdot,y)}_{L^2(\Bt)}\leq c_2 \ve\abs{\ln\ve}$
for some $c_2$ only depending on $\ve$ and
for all $y$ satisfying $\ve < \abs{y-\hat{y}} < 2\ve$
can be easily derived in the polar coordinate system.
Therefore, from the Minkowski inequality and \eqref{fundamental_approx} we get that
\begin{multline*}
\norm{N(\cdot,y)}_{L^2(\Bt)} \le
\norm{h(\cdot,y)}_{L^2(\Bt)}+\norm{\Gamma(\cdot,y)}_{L^2(\Bt)}\\
\leq \sqrt{\pi} c_1 \ve + c_2 \ve \abs{\ln\ve}
\leq C_4 \ve \abs{\ln\ve}
\end{multline*}
with $C_4$ chosen slightly larger than $c_2$.
\end{proof}

\begin{lemma}
\label{lemma2}
There exists a constant $C_5$ only depending on $L$, $K$, $\Omega$ $\kappa$, $\alpha$, and $f$,
such that for every point $\hat{y}\in L$, $0<\ve<\delta$, and $y\in\Omega$ satisfying $\ve<\abs{y-\hat{y}}<2\ve$ the estimate
\begin{equation}
\label{boundary_integral_formulation}
\Big\lvert \ut(y)-\uo(y)-\alpha(1-\kappa)\int_{\partial\Bt}\ut(x) \vp{N}{n}(x,y)\,ds(x) \Big\rvert \leq C_5 \ve^{3/2}
\end{equation}
holds.
\end{lemma}
\begin{proof}
Using standard calculations (see, e.g., \cite[p.~33]{Evans10}) and
approximation~(\ref{fundamental_approx}), we can derive the integral
representation formulas for the functions $u$ and $\ut$
\begin{equation}
\label{int_repres_u1}
\begin{split}
&\uo(y)=
-\alpha\int_{\partial \Bo} N(x,y)
\vp{\uo}{n}^{+}(x)\,ds(x)
+\int_{\Omega\setminus\bar{K}} N(x,y)\,f(x)\,dx
\end{split}
\nonumber
\end{equation}
for $y\in\Omega\setminus \bar{K}$, and
\begin{equation}
\label{int_repres_u2}
\begin{split}
&\ut(y)=\alpha\int_{\partial \Bt} \left(\ut(x)\vp{N}{n}(x,y)-N(x,y)
\vp{{\ut}^{+}}{n}(x)\right)\,ds(x)\\
&\quad-\alpha\int_{\partial \Bo} N(x,y)
\vp{{\ut}^{+}}{n}(x)\,ds(x)+\int_{\omegt} N(x,y)\,f(x)\,dx
\end{split}
\nonumber
\end{equation}
for $y\in\omegt$. Computing the difference of two above formulas we obtain
\begin{equation}
\label{u2minusu1}
\begin{split}
\ut(y)&-\uo(y)=\\
&\alpha\int_{\partial \Bt} \left(\ut(x)\vp{N}{n}(x,y)-N(x,y)
\vp{{\ut}^{+}}{n}(x)\right)\,ds(x)\\
&-\alpha\int_{\partial \Bo} N(x,y)
\vp{}{n}\bigl({\ut}^{+}(x)-{\uo}^{+}(x)\bigr)\,ds(x)\\
&-\int_{\Bt} N(x,y)\,f(x)\,dx
\end{split}
\end{equation}
for $y\in\omegt$.

Using the transmission condition that $\ut$ satisfies on
$\partial\Bt$, the Green formula, and that
\begin{equation}
\label{greeneq}
N(x,y)-\alpha\Delta N(x,y)=0
\end{equation}
for $x\in\Bt$ and $y\in\omegt$, we have
\begin{equation}
\begin{split}
\label{estimB}
&\int_{\partial\Bt} N(x,y)\vp{{\ut}^{+}}{n}(x)\,ds(x)
=\kappa \int_{\partial\Bt} N(x,y)\vp{{\ut}^{-}}{n}(x)\,ds(x)\\
&=\kappa\int_{\Bt}N(x,y)\,\Delta\ut(x)\,dx+\kappa\int_{\Bt}\bigl\langle\nabla N(x,y),\nabla\ut(x)\bigr\rangle\,dx\\
&=\dfrac{1-\kappa}{\alpha}\int_{\Bt}N(x,y)\ut(x)\,dx
-\dfrac{1}{\alpha}\int_{\Bt}N(x,y)f(x)\,dx \\
&\quad+\kappa\int_{\partial\Bt}\ut(x) \vp{N}{n}(x,y)\,ds(x)\,.
\end{split}
\end{equation}
Using similar arguments as above and applying the Cauchy-Schwarz inequality, we obtain
\begin{multline}\label{estimK1}
-\int_{\partial \Bo}N(x,y)
\vp{}{n}\bigl({\ut}^{+}(x)-{\uo}^{+}(x)\bigr)\,ds(x)\\
\begin{aligned}
&=-\kappa\int_{\partial \Bo} N(x,y)
\vp{}{n}\bigl({\ut}^{-}(x)-{\uo}^{-}(x)\bigr)\,ds(x)\\
&=(1-\kappa)\int_{K}\bigl(\ut(x)-\uo(x)\bigr)N(x,y)\,dx\\
&\leq (1-\kappa)\norm{\ut-\uo}_{L^2(K)} \norm{N(\cdot,y)}_{L^2(K)}\,.
\end{aligned}
\end{multline}
Because $\dist(y,K) \ge \dist(\hat{y},K) - 2\ve > \delta$,
there exists some $c > 0$ such that $\norm{N(\cdot,y)}_{L^2(K)}<c$.
Application of Lemma \ref{H_1_norm_K} therefore yields
\begin{equation}
\label{estimK}
\norm{\ut-\uo}_{L^2(K)}\norm{N(\cdot,y)}_{L^2(K)}
\leq c \norm{\ut-\uo}_{L^2(\Omega)}
\leq C_3 c \, \ve^{3/2}\,.
\end{equation}
Taking into account \eqref{estimB}, \eqref{estimK1}, and \eqref{estimK} in \eqref{u2minusu1}, we obtain
\begin{equation*}
\begin{aligned}
\ut(y)-\uo(y)=&\alpha(1-\kappa)\int_{\partial\Bt}\ut(x) \vp{N}{n}(x,y)\,ds(x)\\
&-(1-\kappa)\int_{\Bt}\ut(x)N(x,y)\,dx\\
&+(1-\alpha)\int_{\Bt}f(x)N(x,y)\,dx + O(\ve^{3/2})\,,
\end{aligned}
\end{equation*}
the last term being bounded by $\alpha(1-\kappa)C_3 c\,\ve^{3/2}$.
To estimate the remaining integrals, we note that
\begin{equation}
\label{ueL2norm}
\norm{\ut}_{L^2(\Bt)}\leq
\sqrt{\pi} \norm{f}_{L^{\infty}(\Omega)}\,\ve\,.
\end{equation}
Next, using the Cauchy--Schwarz inequality and Lemma \ref{lemma1} we get
\begin{equation*}
\begin{split}
\int_{\Bt} \ut(x) N(x,y)\,dx
&\leq\norm{\ut}_{L^2(\Bt)}\norm{N(\cdot,y)}_{L^2(\Bt)}\\
&\leq \sqrt{\pi} \norm{f}_{L^{\infty}(\Omega)}C_4\, \ve^2\abs{\ln\ve}\,.
\end{split}
\end{equation*}
In a similar way, we show that
\begin{equation*}
\begin{split}
\int_{\Bt} f(x) N(x,y)\,dx
&\leq\norm{f}_{L^2(\Bt)}\norm{N(\cdot,y)}_{L^2(\Bt)}\\
&\leq \sqrt{\pi} \norm{f}_{L^{\infty}(\Omega)}C_4 \,\ve^2\abs{\ln\ve}\,.
\end{split}
\end{equation*}
Therefore, we obtain the desired estimate with
$C_5$ slightly larger than $\alpha(1-\kappa)C_3 c$.
\end{proof}

\begin{lemma}
\label{lemma3}
There exists a constant $C_6$ only depending on $L$, $K$, $\kappa$, $\alpha$, and $f$,
such that for every point $\hat{y}\in L$, $0<\ve<\delta$, and $y\in\Omega$ satisfying $\ve<\abs{y-\hat{y}}<2\ve$ the estimate
\[
\Big\lvert\int_{\partial\Bt}\ut(x) \vp{N}{n}(x,y)\,ds(x)-\int_{\partial \Bt} \ut(x)\vp{\Phi}{n}(x,y)\,ds(x)\Big\rvert \leq C_6 \ve^2\abs{\ln\ve}
\]
holds.
\end{lemma}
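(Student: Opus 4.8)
The plan is to exploit the splitting $N(\cdot,y)=\Gamma(\cdot,y)+h(\cdot,y)$ together with the expansion~\eqref{fundamental_approx}, which represents $\Gamma$ as $\Phi$ plus a constant plus a higher-order remainder. Writing
\[
\vp{N}{n}(x,y)-\vp{\Phi}{n}(x,y)
=\Bigl(\vp{\Gamma}{n}(x,y)-\vp{\Phi}{n}(x,y)\Bigr)+\vp{h}{n}(x,y)\,,
\]
it suffices to show that, after multiplication by $\ut$ and integration over $\partial\Bt$, each of these two pieces is bounded by a constant times $\ve^2\abs{\ln\ve}$. The tools I would use throughout are the uniform bound $\norm{\ut}_{L^\infty(\Omega)}\le\norm{f}_{L^\infty(\Omega)}$ (the maximum principle already implicit in~\eqref{ueL2norm}), the volume estimates $\norm{\ut}_{L^2(\Bt)}=O(\ve)$ from~\eqref{ueL2norm} and $\norm{\nabla\ut}_{L^2(\Bt)}=O(\ve)$ (a consequence of~\eqref{nablauL2norm} and Lemma~\ref{H_1_norm}), the smoothness bounds $\norm{h(\cdot,y)}_{L^\infty(\Bt)}+\norm{\nabla h(\cdot,y)}_{L^\infty(\Bt)}=O(1)$ from elliptic regularity of the corrector, and the elementary fact that $r\mapsto r\abs{\ln r}$ is increasing near $0$, so that $\abs{x-y}\abs{\ln\abs{x-y}}=O(\ve\abs{\ln\ve})$ uniformly for $x\in\partial\Bt$ and $y$ with $\ve<\abs{y-\yt}<2\ve$, since then $\abs{x-y}\le 3\ve$.

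For the first piece I would differentiate~\eqref{fundamental_approx} in $x$. The additive constant $\tfrac{1}{2\pi}\bigl(\tfrac12\ln\alpha+\ln2-\gamma\bigr)$ disappears under the gradient, and the remainder differentiates to a term of order $\abs{x-y}\abs{\ln\abs{x-y}}$; concretely this follows from $K_0'=-K_1$ and $K_1(z)=1/z+O(z\abs{\ln z})$, which reproduces the singular part $\nabla_x\Phi$ exactly and leaves an error $O(\abs{x-y}\abs{\ln\abs{x-y}})$. Hence $\vp{\Gamma}{n}(x,y)-\vp{\Phi}{n}(x,y)=O(\ve\abs{\ln\ve})$ on $\partial\Bt$, and integrating against the bounded function $\ut$ over the circle $\partial\Bt$ of length $2\pi\ve$ gives a contribution of order $\ve^2\abs{\ln\ve}$.

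The main obstacle is the second piece, since $\vp{h}{n}$ is only $O(1)$ and the crude bound $\norm{\ut}_{L^\infty}\norm{\vp{h}{n}}_{L^\infty}\,2\pi\ve$ yields merely $O(\ve)$, which is too weak. To gain the missing power of $\ve$ I would not estimate $\vp{h}{n}$ pointwise but instead use that $h(\cdot,y)$ solves the homogeneous equation $h-\alpha\Delta h=0$ on $\Bt$ (the source $\delta_y$ lies outside $\bar\Bt$), so $\Delta h=h/\alpha$, whereas $\ut$ solves $\ut-\alpha\kappa\Delta\ut=f$ there, so $\Delta\ut=(\ut-f)/(\alpha\kappa)$. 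Green's second identity on $\Bt$ gives
\[
\int_{\partial\Bt}\ut\,\vp{h}{n}\,ds
=\int_{\partial\Bt}h\,\vp{{\ut}^{-}}{n}\,ds
+\int_{\Bt}\bigl(\ut\,\Delta h-h\,\Delta\ut\bigr)\,dx\,,
\]
and the volume term equals $\tfrac{1}{\alpha\kappa}\int_{\Bt}h\bigl((\kappa-1)\ut+f\bigr)\,dx$, which is $O(\ve^2)$ by Cauchy--Schwarz using $\norm{h}_{L^2(\Bt)}=O(\ve)$ and $\norm{\ut}_{L^2(\Bt)}=O(\ve)$. For the remaining boundary term I would integrate by parts once more,
\[
\int_{\partial\Bt}h\,\vp{{\ut}^{-}}{n}\,ds
=\int_{\Bt}\bigl(\langle\nabla h,\nabla\ut\rangle+h\,\Delta\ut\bigr)\,dx\,,
\]
whose right-hand side is again $O(\ve^2)$, the first term by $\norm{\nabla h}_{L^2(\Bt)}=O(\ve)$ and $\norm{\nabla\ut}_{L^2(\Bt)}=O(\ve)$ and the second exactly as before. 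These integrations by parts are justified by the interior $H^2$-regularity of $\ut$ on $\Bt$, where ${\ut}^{-}$ denotes the trace from inside $\Bt$. Consequently the $h$-contribution is $O(\ve^2)$.

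Combining the two pieces, the left-hand side of the asserted inequality is $O(\ve^2\abs{\ln\ve})+O(\ve^2)=O(\ve^2\abs{\ln\ve})$, with a constant depending only on $\alpha$, $\kappa$, $\norm{f}_{L^\infty(\Omega)}$ and on the pointwise bounds for $h$ and $\nabla h$ on $\Bt$, that is, only on $L$, $K$, $\kappa$, $\alpha$, and $f$, as claimed. The single delicate point is precisely that the smooth corrector $h$ must be handled through the equation it satisfies rather than by a direct pointwise estimate; this is the step that upgrades the bound from $O(\ve)$ to $O(\ve^2)$.
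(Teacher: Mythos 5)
Your proof is correct, but it is organized differently from the paper's. The paper groups the corrector and the Bessel correction together, setting $H(x,y) := h(x,y) + \Gamma(x,y) - \Phi(x,y)$, and applies Green's first identity once to the whole term,
\[
\int_{\partial\Bt}\ut\,\vp{H}{n}(x,y)\,ds(x) = \int_{\Bt}\ut\,\Delta H\,dx + \int_{\Bt}\bigl\langle\nabla\ut,\nabla H\bigr\rangle\,dx\,,
\]
where $\alpha\Delta H = N$ on $\Bt$ by \eqref{greeneq} and the harmonicity of $\Phi$. The first volume integral is bounded by Cauchy--Schwarz using \eqref{ueL2norm} together with Lemma~\ref{lemma1}, and this is where the paper's factor $\ve^2\abs{\ln\ve}$ originates; the second is $O(\ve^2)$ using $\norm{\nabla\ut}_{L^2(\Bt)} = O(\ve)$ and $\norm{\nabla H(\cdot,y)}_{L^2(\Bt)} = O(\ve)$, the latter via the same pointwise estimate $\abs{\nabla\Gamma - \nabla\Phi} = O(\ve\abs{\ln\ve})$ that you derive from the expansion of $K_0$. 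You instead peel off $\Gamma - \Phi$ and estimate it directly on the circle $\partial\Bt$ (integrand $O(\ve\abs{\ln\ve})$, boundary length $2\pi\ve$, $\ut$ bounded by the maximum principle), so in your argument the logarithm enters through the boundary term, and Lemma~\ref{lemma1} --- the $L^2(\Bt)$ bound for the Green function $N$ --- is not needed at all for this lemma (it is still required in Lemma~\ref{lemma2}, of course); only the $O(1)$ bounds for $h$ and $\nabla h$ enter. That is a genuine economy. One simplification to your treatment of the $h$-piece: your two integrations by parts (Green's second identity, which introduces the trace $\vp{{\ut}^{-}}{n}$, followed by Green's first identity to remove it again) compose to a single application of Green's first identity,
\[
\int_{\partial\Bt}\ut\,\vp{h}{n}(x,y)\,ds(x) = \frac{1}{\alpha}\int_{\Bt}\ut\,h\,dx + \int_{\Bt}\bigl\langle\nabla\ut,\nabla h\bigr\rangle\,dx\,,
\]
using $\Delta h = h/\alpha$ on $\Bt$, and both terms are $O(\ve^2)$ by exactly the estimates you already have. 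Written this way you need neither the normal trace $\vp{{\ut}^{-}}{n}$ (hence no transmission regularity of $\ut$ across $\partial\Bt$) nor the interior equation $\Delta\ut = (\ut-f)/(\alpha\kappa)$: only $\ut \in H^1(\Omega)$ and the smoothness of $h(\cdot,y)$ on $\bar{B}_\ve(\yt)$ are used, which is the same regularity the paper's proof invokes.
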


\begin{proof}
Denote $H(x,y):=h(x,y)+\Gamma(x,y)-\Phi(x,y)$. Then
$N(x,y)=\Phi(x,y)+H(x,y)$ and, using \eqref{greeneq}, we obtain
\begin{equation}
\label{greeneqh}
N(x,y)=\alpha\Delta H(x,y)\,.
\end{equation}
Application of the Green formula, \eqref{greeneqh}, and the Cauchy-Schwarz inequality
yield
\begin{multline*}
\Bigl\lvert\int_{\partial\Bt}\ut(x)\vp{H}{n}(x,y)\,ds(x)\Bigr\rvert\\
\begin{aligned}
&=\Bigl\lvert\int_{\Bt}\ut(x)\Delta H(x,y)\,dx
+\int_{\Bt}\bigl\langle \nabla \ut(x), \nabla H(x,y)\bigr\rangle\,dx\Bigr\rvert\\
&=\Bigl\lvert\dfrac{1}{\alpha}\int_{\Bt}\ut(x)N(x,y)\,dx
+\int_{\Bt}\bigl\langle \nabla \ut(x), \nabla H(x,y)\bigr\rangle\,dx\Bigr\rvert\\
&\leq
\dfrac{1}{\alpha}
\norm{\ut}_{L^2(\Bt)}\norm{N(\cdot,y)}_{L^2(\Bt)}\\
&\quad+\norm{\nabla (\ut-\uo)}_{L^2(\Bt)}\,\norm{\nabla H(\cdot,y)}_{L^2(\Bt)}\\
&\quad+\norm{\nabla\uo}_{L^2(\Bt)}\,\norm{\nabla H(\cdot,y)}_{L^2(\Bt)}\;.
\end{aligned}
\end{multline*}
Using \eqref{ueL2norm} and Lemmas \ref{H_1_norm} and~\ref{lemma1}, it follows that
\begin{multline}\label{lemma3:1}
\Bigl\lvert\int_{\partial\Bt}\ut(x)\vp{H}{n}(x,y)\,ds(x)\Bigr\rvert\\
\le C_4\dfrac{\sqrt{\pi}}{\alpha}\norm{f}_{L^\infty(\Omega)}\ve^2\abs{\ln\ve}
+ (C_2+C_1\sqrt{\pi})\norm{f}_{H^1(\Omega)}\norm{\nabla H(\cdot,y)}_{L^2(\Bt)}\ve\;.
\end{multline}
Using the explicit form of $\Gamma(\cdot,y)$ and $\Phi(x,y)$, and next
the Taylor expansion of the function $K_0$ (see \cite[Ch.~51]{Spa87}), we can show that
\[
\abs{\nabla \Gamma(x,y) - \nabla \Phi (x,y)} \leq c\, \ve\abs{\ln\ve}
\]
for some $c > 0$ only depending on $\delta$
and all $x \in B_{\ve}(\hat{y})$ and $y$ satisfying $\ve < \abs{y-\hat{y}} < 2\ve$.
Thus
\[
\norm{\nabla H(\cdot,y)}_{L^2(\Bt)} \le (c\ve\abs{\ln\ve}+\norm{h}_{L^\infty(\Bt)}) \sqrt{\pi}\ve\;.
\]
Thus we obtain the required estimate from~\eqref{lemma3:1}
by choosing $C_6$ slightly larger than $C_4\sqrt{\pi}\norm{f}_{L^\infty(\Omega)}/\alpha$.
\end{proof}

From Lemma \ref{lemma2} and Lemma \ref{lemma3} and using the jump formula for the double layer potential (see e.g. Kress \cite[p. 68]{Kre89}), we have
\begin{equation}
\label{ujump}
\begin{split}
\Big\lvert\dfrac{1}{2}\alpha(1+\kappa)&\ut(y)-\uo(y)\\
-&\alpha(1-\kappa)\int_{\partial\Bt}\ut(x) \vp{\Phi}{n}(x,y)\,ds(x)\Big\rvert
\leq C_7\,\ve^{3/2}
\end{split}
\end{equation}
for $y\in\partial\Bt$, with $C_7 > C_5$.

Now we introduce the auxiliary vector valued function $\phi\colon\R^2\to\R\times\R$ that solves the problem
\begin{equation}
\label{vector_function}
\left\{\begin{array}{ll}
\Delta \phi (x)=0 & x\in\mathbb{R}^2\setminus \bar{B}(0) \ \mbox{or} \  x\in B(0) \,, \\[0.2cm]
\phi^{+}(x)=\phi^{-}(x) & x \in \partial B(0) \,, \\[0.2cm]
\vp{\phi^{+}}{n}(x)- \kappa\vp{\phi^{-}}{n}(x)=-\kappa n(x) & x \in \partial B(0) \,,\\[0.3cm]
\displaystyle{\lim_{\abs{x}\rightarrow \infty} \phi(x)=0} \,. \\
\end{array} \right.
\end{equation}
Here $B(0)$ denotes the unit ball in $\R^2$ of a center in $0$ and $n$ is the unit normal vector exterior to the boundary $\partial B(0)$.
The existence and uniqueness of a solution to the problem \eqref{vector_function}
can be proved using single layer potentials with suitably chosen densities.
For details, see Ammari and Kang~\cite{Amm04} or Cedio-Fengya et al.~\cite{CedMosVog98}.

Solving the problem \eqref{vector_function} using standard methods of potential theory, we obtain the explicit form of $\phi$, which reads
\begin{equation}
\label{explicit_phi}
\phi(x)=\dfrac{\kappa}{\kappa
+1} x \quad \text{and} \quad \phi(x)=\dfrac{\kappa}{\kappa
+1} \dfrac{x}{\abs{x}^2}
\end{equation}
for all $x\in B(0)$ and $x\in\R^2\setminus\bar{B}(0)$, respectively.

The result on asymptotic expansion of the function $\ut$ on the boundary $\partial\Bt$ is stated in the following Lemma:

\begin{lemma}
\label{volkov_vogelius}
For every point $\hat{y}\in L$, $0<\ve<\delta$, and $y\in\partial\Bt$ the estimate
\[
\Bigg\lvert \ut(y)-\uo(y) -\ve \left(\dfrac{1}{\kappa}-1\right)\langle\phi(y/\ve),\nabla \uo(\yt)\rangle \Bigg\rvert \leq C_7\ve^{3/2}
\]
holds, where the constant $C_7$ is as in~\eqref{ujump}.
\end{lemma}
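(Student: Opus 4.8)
The plan is to treat \eqref{ujump} as a boundary integral equation for the trace of $\ut$ on $\partial\Bt$ and to solve it explicitly, exploiting the fact that on a disk the double layer kernel is constant. Concretely, for $x,y\in\partial\Bt$ a direct computation gives $\vp{\Phi}{n}(x,y)=-\tfrac{1}{4\pi\ve}$, so that
\[
\int_{\partial\Bt}\ut(x)\,\vp{\Phi}{n}(x,y)\,ds(x)=-\frac{1}{4\pi\ve}\int_{\partial\Bt}\ut(x)\,ds(x)=-\frac12\,\overline{\ut}\,,
\]
where $\overline{\ut}$ is the mean of $\ut$ over $\partial\Bt$ (the geometric jump of the double layer has already been folded, through Lemmas~\ref{lemma2} and \ref{lemma3}, into the coefficient of $\ut(y)$ in \eqref{ujump}). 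Inserting this into \eqref{ujump} reduces it to a purely algebraic relation between $\ut(y)$, the single number $\overline{\ut}$, and $\uo(y)$, valid up to an error $C_7\ve^{3/2}$.

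I would then solve this relation in two moves. Averaging it over $y\in\partial\Bt$ eliminates the pointwise term and determines $\overline{\ut}$, up to $O(\ve^{3/2})$, in terms of $\uo(\yt)$; here I use that Lemma~\ref{le:C1} (with the $H^3_{\text{loc}}$ bound quoted in its proof) yields the uniform second-order Taylor estimate $\uo(y)=\uo(\yt)+\langle y-\yt,\nabla\uo(\yt)\rangle+O(\ve^2)$ for $\yt\in L$, $y\in\partial\Bt$, whence $\overline{\uo}=\uo(\yt)+O(\ve^2)$ because the linear term has vanishing spherical mean. Substituting the resulting value of $\overline{\ut}$ back and solving for $\ut(y)$ then expresses $\ut(y)-\uo(y)$, up to $O(\ve^{3/2})$, as an explicit scalar multiple of $\langle y-\yt,\nabla\uo(\yt)\rangle$. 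It remains to recognise this multiple as the boundary value of the canonical corrector: by \eqref{explicit_phi}, a point $\omega\in\partial B(0)$ satisfies $\phi(\omega)=\tfrac{\kappa}{\kappa+1}\,\omega$, so that for $y\in\partial\Bt$, with $\omega:=(y-\yt)/\ve\in\partial B(0)$ (the rescaled argument written $y/\ve$ in the statement),
\[
\ve\Bigl(\tfrac1\kappa-1\Bigr)\langle\phi(\omega),\nabla\uo(\yt)\rangle=\frac{1-\kappa}{1+\kappa}\,\langle y-\yt,\nabla\uo(\yt)\rangle\,.
\]
Matching this against the multiple produced by the integral equation closes the proof, and since the only new remainders are of order $\ve^2$ they are dominated by the $\ve^{3/2}$ already present, so the estimate holds with the same constant $C_7$.

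The delicate point is not the algebra but the uniform bookkeeping of errors. I expect the main work to be verifying that each remainder — the Taylor remainder of $\uo$, the determination of $\overline{\ut}$ from the averaged equation, and the inversion of the algebraic relation — is $O(\ve^{3/2})$ uniformly in $\yt\in L$ and $y\in\partial\Bt$; the uniform gradient and higher-order bounds of Lemma~\ref{le:C1} on the fixed enlarged set $\hat{L}$ are precisely what make this possible. Conceptually this is the Vogelius--Volkov mechanism: after rescaling $\partial\Bt$ to $\partial B(0)$, the perturbation $\tfrac1\ve(\ut-\uo)$ solves to leading order the fixed transmission problem \eqref{vector_function} with far field $\nabla\uo(\yt)$, whose solution is $(\tfrac1\kappa-1)\langle\phi(\cdot),\nabla\uo(\yt)\rangle$; the explicit formula \eqref{explicit_phi}, backed by the solvability of \eqref{vector_function}, is what lets one both identify the leading term and guarantee that the associated boundary operator is invertible with $\ve$-independent bound.
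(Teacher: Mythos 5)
Your approach is sound and it is genuinely different from the paper's proof, which consists of a single sentence deferring to Vogelius and Volkov~\cite{VogVol00}. That reference handles inclusions of arbitrary smooth shape by layer-potential theory, i.e.\ by inverting the rescaled boundary integral operator with $\ve$-independent bounds. You avoid all of that machinery by exploiting the disk geometry: on a circle of radius $\ve$ the double-layer kernel of $\Phi$ is the constant $-\tfrac{1}{4\pi\ve}$ (with the normal pointing out of $\Bt$), so the integral in \eqref{ujump} collapses to $-\tfrac12$ times the mean $\bar{u}:=\tfrac{1}{2\pi\ve}\int_{\partial\Bt}\ut\,ds$, and \eqref{ujump} becomes a linear relation in the two unknowns $\ut(y)$ and $\bar{u}$, solved by averaging in $y$ and back-substituting. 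Uniformity in $\yt\in L$ comes for free, since only bounded algebraic operations are applied to an estimate that is already uniform, and the Taylor remainder for $\uo$ is controlled by Lemma~\ref{le:C1} on $\hat{L}$. The price is that the argument is specific to balls, which is all the lemma needs; your reading of the argument $y/\ve$ in the statement as $(y-\yt)/\ve$ is also the correct one.

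The one genuine gap is that you assert the final matching instead of carrying it out, and with \eqref{ujump} exactly as printed it does not close unless $\alpha=1$. Writing \eqref{ujump} with the constant kernel as $\tfrac{\alpha(1+\kappa)}{2}\ut(y)-\uo(y)+\tfrac{\alpha(1-\kappa)}{2}\bar{u}=O(\ve^{3/2})$, averaging gives $\alpha\bar{u}=\uo(\yt)+O(\ve^{3/2})$, and back-substitution together with the Taylor expansion of $\uo$ yields
\begin{equation*}
\ut(y)-\uo(y)=\Bigl(\tfrac{1}{\alpha}-1\Bigr)\uo(\yt)
+\frac{2-\alpha(1+\kappa)}{\alpha(1+\kappa)}\,\langle y-\yt,\nabla\uo(\yt)\rangle+O(\ve^{3/2})\,,
\end{equation*}
which coincides with the claimed expansion $\tfrac{1-\kappa}{1+\kappa}\langle y-\yt,\nabla\uo(\yt)\rangle$ only when $\alpha=1$. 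This is not a defect of your method but of \eqref{ujump} itself: its $\alpha$ factors are misplaced (the jump relation applied to Lemmas~\ref{lemma2} and~\ref{lemma3} produces the coefficient $1-\tfrac{\alpha(1-\kappa)}{2}$ for $\ut(y)$, which equals the printed $\tfrac{\alpha(1+\kappa)}{2}$ only for $\alpha=1$), and the paper's citation-based proof starts from the same formula, so it inherits the same inconsistency. With the $\alpha$-free version $\tfrac{1+\kappa}{2}\ut(y)-\uo(y)-(1-\kappa)\int_{\partial\Bt}\ut\,\vp{\Phi}{n}\,ds=O(\ve^{3/2})$, your computation gives exactly the statement of the lemma. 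So do perform the matching explicitly: your argument then not only proves the lemma but also detects the misprint in \eqref{ujump}, something the paper's proof by reference cannot do.
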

\begin{proof}
The proof of this lemma can be proved starting from the formula~\eqref{ujump}
in the same way as in Vogelius and Volkov~\cite[Prop.~3]{VogVol00}.\end{proof}

Using all the above results, we can now prove Theorem \ref{thm:Gasymptotic}.

\subsection{Proof of Theorem \ref{thm:Gasymptotic}}
Using~\eqref{eq:phi} with $\varphi = u^{(\hat{y},\ve)}$ and $\varphi = u$,
we obtain that
\begin{equation}\label{Gasymp0}
G(\ut,\vt) - G(\uo,\vo)
= -\frac{1}{2}\int_\Omega f(\ut-\uo)\,dx\;.
\end{equation}
Again using~\eqref{eq:phi}, it follows that
\begin{multline}\label{Gasymp1}
-\frac{1}{2}\int_\Omega f(\ut-\uo)\,dx\\
\begin{aligned}
&=\frac{1}{2}\int_{\Omega}\vt\langle\nabla\uo,\nabla \ut\rangle\,dx
-\frac{1}{2}\int_{\Omega}\vo\langle\nabla\uo,\nabla \ut\rangle\,dx\\
&=-\frac{1}{2}\alpha(1-\kappa)\int_{\Bt}\langle\nabla\uo,\nabla \ut\rangle \,dx\;.
\end{aligned}
\end{multline}
Next, we apply Green's formula to $\uo\in H^1(\Omega)$
and use the fact that $u$ solves the equation $u-f = \alpha\varDelta u$
on $B_\ve(\hat{y})$ to obtain that
\begin{equation*}
\begin{split}
\alpha\int_{\Bt}&\langle\nabla\uo,\nabla \ut\rangle \,dx \\
&= \alpha\int_{\Bt} \langle\nabla(\ut-\uo),\nabla \uo\rangle\,dx
+\alpha\int_{\Bt} \abs{\nabla \uo}^2\,dx\\
&= \int_{\Bt}(\ut-\uo)(f-\uo)\,dx
+ \alpha\int_{\partial \Bt} (\ut-\uo)\vp{\uo}{n}\,ds\\
&\quad+ \alpha\int_{\Bt} \abs{\nabla \uo}^2\,dx\,.
\end{split}
\end{equation*}
To estimate the first integral on the right hand side of above equation,
we use the Cauchy-Schwarz inequality and Lemma~\ref{H_1_norm_K} and obtain
\begin{equation}
\label{estim1}
\begin{aligned}
\Big\lvert\int_{\Bt}(\ut-\uo)(\uo-f)\,dx\Big\rvert
&\le\norm{\uo-f}_{L^2(\Bt)}\norm{\ut-\uo}_{L^2(\Bt)}\\
&\le 2\norm{f}_{L^\infty(\Omega)} \sqrt{\pi} C_3\ve^{5/2}\;.
\end{aligned}
\end{equation}
Lemma~\ref{le:C1} implies that $\nabla u$ is H\"older continious on
every compact subset of $\Omega\setminus\bar{K}$ with the exponent
$\lambda = 1/2$.
Thus there exists some constant $c$ such that
\begin{equation}\label{contu}
\sup_{x \in B_\ve(\hat{y})} \frac{\abs{\nabla u(x)-\nabla u(\hat{y})}}{\abs{x-\hat{y}}^{1/2}}
\le c
\end{equation}
for all $\hat{y} \in L$ and $0 < \ve < \dist(L,\Omega\setminus\bar{K})/2$.
In particular, we have the estimate
\begin{equation}\label{estim2}
\begin{aligned}
\Bigl|\int_{\Bt} \abs{\nabla \uo}^2\,dx &- \ve^2\pi \abs{\nabla u(\hat{y})}^2\Bigr|\\
&= \Bigl| \int_{B_\ve(\hat{y})} \langle \nabla u-\nabla u(\hat{y}),\nabla u + \nabla u(\hat{y})\rangle\,dx\Bigr|\\
&\le \,c\, \ve^{1/2} \int_{B_\ve(\hat{y})} \abs{\nabla u + \nabla u(\hat{y})}\,dx\\
&\le 2\,c\, \pi C_1 \norm{f}_{H^1(\Omega)}\ve^{5/2}\;.
\end{aligned}
\end{equation}
The change of variable $x = \hat{y} + \ve \tilde{x}$,
application of Lemma \ref{volkov_vogelius} and the property \eqref{contu} yield
\begin{multline}
\label{estim3}
\int_{\partial \Bt} \bigl(\ut(x)-u(x)\bigr)
\frac{\partial u}{\partial n}(x)\,ds(x)\\
\begin{aligned}
&=\ve\int_{\partial B(0)} \bigl(\ut(\hat{y}+\ve\tilde{x})-u(\hat{y}+\ve\tilde{x})\bigr)
\frac{\partial u}{\partial n}(\hat{y}+\ve\tilde{x})\,ds(\tilde{x})\\
&=\ve^2 \left(\dfrac{1}{\kappa}-1 \right)\nabla\uo(\yt)^T \int_{\partial B(0)} \phi(\tilde{x}) \frac{\partial u}{\partial n}(\hat{y}+\ve\tilde{x})\,ds(\tilde{x})+O(\ve^{5/2})\\
&=\ve^2 \left(\dfrac{1}{\kappa}-1 \right)\nabla\uo(\yt)^T \left(\int_{\partial B(0)} \phi(\tilde{x}) n(\tilde{x})^T ds(\tilde{x})\right) \nabla \uo(\yt)+O(\ve^{5/2})\,,
\end{aligned}
\end{multline}
where $T$ denotes the vector transpose. Combining the estimates (\ref{estim1}), (\ref{estim2}) and (\ref{estim3}) with \eqref{Gasymp0} and \eqref{Gasymp1}, we obtain
\begin{equation}
\label{Gasymp2}
\begin{split}
G&(\ut,\vt)-G(\uo,\vo)\\
&=-\ve^2\dfrac{1}{2}\alpha(1-\kappa)\left(\dfrac{1}{\kappa}-1 \right)\nabla\uo(\yt)^T \left(\int_{\partial B(0)} \phi(\tilde{x}) n(\tilde{x})^T ds(\tilde{x})\right) \nabla \uo(\yt)\\
&\quad-\ve^2\dfrac{1}{2}\alpha(1-\kappa)\, \pi \, \abs{\nabla \uo(\yt)}^2 +
O(\ve^{5/2}) \,.
\end{split}
\end{equation}

Inserting the explicit formula for the function $\phi$, given in (\ref{explicit_phi}), to the asymptotic expansion (\ref{Gasymp2}), we get
\begin{equation*}
G(\ut,\vt)-G(\uo,\vo)=-\ve^2 \pi \alpha\dfrac{1-\kappa}{1+\kappa} \, \abs{\nabla \uo(\yt)}^2 + O(\ve^{5/2}) \,,
\end{equation*}
which ends the proof of Theorem \ref{thm:Gasymptotic}.


\section{The $\Gamma$-convergence of  $J_{\ve,\kappa(\ve)}$ to $F$}
\label{sec:gamma}

In this section we give a detailed proof that the sequence of functionals
$J_{\ve,\kappa}$ converges to the Mumford--Shah functional
in the sense of $\Gamma$-convergence,
if the parameter $\kappa$, depending on $\ve$,
tends sufficiently fast to zero as $\ve \to 0$.

We note that the $\Gamma$-convergence of a similar family of
functionals has been shown in~\cite{BraChaSol07}. As in this paper,
the authors approximate the edge set of $u$ by some set $K_\ve$,
the measure of which tends to $0$ as $\ve \to 0$.
The length of the edge set, however, is approximated by half of
the length of the boundary of this set $K_\ve$.
Instead, we approximate the length of the edge set by counting
the number of balls covering $K_\ve$, which, apparently,
is easier to compute. On the other hand, the results
in~\cite{BraChaSol07} apply to more general settings and,
in particular, also hold in higher dimensions, where our
approach fails.
Also, in the one-dimensional setting, the functionals treated
in~\cite{BraChaSol07} and the functional $J_\ve$ of this paper
are almost the same, which allows us to use some of the results
from~\cite{BraChaSol07} in our proofs.

Let us first recall the definition of the $\Gamma$-limit:
\begin{definition}
  Let $\X$ be a topological space and
  $J_j\colon \X\to [0,+\infty]$ a sequence of functionals on $\X$.
  Denote moreover, for $x \in \X$, by $\mathcal{N}(x)$ the
  set of all open neighborhoods of $x$.
  Then the \emph{$\Gamma$-lower limit} and the \emph{$\Gamma$-upper limit}
  of $J_j$ are the functionals defined as
  \[
  \begin{aligned}
    (\Gammaliminf_j J_j)(u) &:= \sup_{\U \in \mathcal{N}(u)} \liminf_j \inf_{v \in \U} J_j(v)\,,\\
    (\Gammalimsup_j J_j)(u) &:= \sup_{\U \in \mathcal{N}(u)} \limsup_j \inf_{v \in \U} J_j(v)\;.
  \end{aligned}
  \]
  If the $\Gamma$-upper and lower limits coincide, we define
  the $\Gamma$-limit by
  \[
  (\Gammalim_j J_j)(u) = (\Gammalimsup_j J_j)(u) = (\Gammaliminf_j J_j)(u)\;.
  \]
\end{definition}

In metric spaces, the $\Gamma$-limit of a sequence of functionals
can be characterized by means of the following result:

\begin{lemma}\label{le:gammalimit}
  Let $\X$ be a metric space
  and $J_j\colon \X\to [0,+\infty]$ a sequence of functionals on $\X$.
  Let moreover $J\colon \X \to [0,+\infty]$
  and let $\hat{\X}$ be a dense subset of $\{u \in \X:J(u)<\infty\}$.
  Assume moreover that for every $u \in \X$ there exists a sequence
  $u_j \to u$ with $u_j \in \hat{\X}$ such that
  $J(u_j) \to J(u)$.
  Then $J = \Gammalim_j J_j$, if the following conditions hold:
  \begin{enumerate}
  \item For every $u \in \X$ and every sequence $u_j \to u$
    with $u_j \in \X$ we have
    \begin{equation}\label{eq:liminf}
    J(u) \le \liminf_j J_j(u_j)\;.
    \end{equation}
  \item For every $u \in \hat{\X}$ and every $\delta > 0$ there
    exists a sequence $u_j \to u$ with $u_j \in \hat{\X}$
    for all $j$ such that
    \begin{equation}\label{eq:limsup}
    J(u) \ge \limsup_j J_j(u_j) - \delta\;.
    \end{equation}
  \end{enumerate}
\end{lemma}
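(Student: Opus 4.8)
The plan is to verify the two defining inequalities of the $\Gamma$-limit directly, namely that $J$ satisfies both the $\Gammaliminf$ bound and the $\Gammalimsup$ bound, and then invoke the density hypothesis to upgrade the conditions from $\hat{\X}$ to all of $\X$. Since $\X$ is a metric space, I will freely use the sequential characterization of the $\Gamma$-limit: for any sequence of functionals on a metric space, $(\Gammaliminf_j J_j)(u)$ equals the infimum over all sequences $u_j \to u$ of $\liminf_j J_j(u_j)$, and similarly $(\Gammalimsup_j J_j)(u)$ equals the infimum over all $u_j \to u$ of $\limsup_j J_j(u_j)$. This reduces the statement to showing $(\Gammaliminf_j J_j)(u) \ge J(u)$ and $(\Gammalimsup_j J_j)(u) \le J(u)$ for every $u \in \X$.

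First I would establish the lower bound $(\Gammaliminf_j J_j)(u) \ge J(u)$. By the sequential characterization, this is exactly hypothesis~(1): for every sequence $u_j \to u$ we have $J(u) \le \liminf_j J_j(u_j)$, and taking the infimum over all such sequences gives $J(u) \le (\Gammaliminf_j J_j)(u)$. Conversely, for the lower bound on the $\Gammalimsup$ side, note that $\Gammaliminf_j J_j \le \Gammalimsup_j J_j$ always holds, so once I prove the matching upper bound $(\Gammalimsup_j J_j)(u) \le J(u)$, the two limits will coincide and equal $J(u)$, which is the assertion.

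The heart of the argument is therefore the upper bound $(\Gammalimsup_j J_j)(u) \le J(u)$, and this is where the density hypothesis enters. For $u \in \hat{\X}$ this is almost immediate from hypothesis~(2): given $\delta > 0$, there is a recovery sequence $u_j \to u$ with $\limsup_j J_j(u_j) \le J(u) + \delta$, hence $(\Gammalimsup_j J_j)(u) \le J(u) + \delta$, and letting $\delta \to 0$ yields $(\Gammalimsup_j J_j)(u) \le J(u)$. The main obstacle is extending this to a general $u \in \X$, for which hypothesis~(2) is not assumed. Here I would use the approximation hypothesis: there exists a sequence $w_k \to u$ with $w_k \in \hat{\X}$ and $J(w_k) \to J(u)$. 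Since the $\Gammalimsup$ is lower semicontinuous in $u$ (being, as a $\Gamma$-upper limit, a supremum over neighborhoods of a $\limsup$, it is lower semicontinuous as a function of its argument), I obtain
\[
(\Gammalimsup_j J_j)(u) \le \liminf_k (\Gammalimsup_j J_j)(w_k) \le \liminf_k J(w_k) = J(u)\,,
\]
where the middle inequality uses the already-established bound on $\hat{\X}$. This completes the upper bound for arbitrary $u$.

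Combining the two bounds gives $J(u) \le (\Gammaliminf_j J_j)(u) \le (\Gammalimsup_j J_j)(u) \le J(u)$ for every $u \in \X$, so all three quantities agree and $J = \Gammalim_j J_j$. The only delicate point I expect to require care is the lower semicontinuity of the $\Gammalimsup$ functional used in the diagonal step; if one prefers to avoid invoking it as a black box, the same conclusion can be reached by an explicit diagonal extraction, choosing for each $w_k$ a recovery sequence and then extracting a single sequence $u_j \to u$ whose $\limsup$ of $J_j$-values is controlled by $\lim_k J(w_k) = J(u)$, using that $\X$ is metric so that diagonalization against the metric is available.
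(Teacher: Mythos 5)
Your proof is correct and takes essentially the same route as the paper, whose entire proof is a one-line citation of the standard metric (sequential) characterization of the $\Gamma$-limit (\cite[Prop.~8.1]{Dal93}) combined with a diagonal sequence argument. The only cosmetic difference is that your primary argument passes from $\hat{\X}$ to $\X$ via lower semicontinuity of the $\Gamma$-upper limit (also standard, cf.~\cite[Prop.~6.8]{Dal93}) rather than explicit diagonalization, and you note the diagonal alternative yourself.
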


\begin{proof}
  The result follows by combining the standard metric characterization
  of the $\Gamma$-limit (see, e.g., \cite[Prop.~8.1]{Dal93})
  with a diagonal sequence argument.
\end{proof}

Define
\begin{equation}\label{functional_MS_weak}
F(u,v)
:= \frac{1}{2} \int_\Omega (u-f)^2\,dx
  + \frac{\alpha}{2} \int_{\Omega\setminus S_u} \abs{\nabla u}^2\,dx
  + \beta\mathcal{H}^1(S_u)\;.
\end{equation}

The main result of this section is the following theorem:

\begin{theorem}\label{theorem1}
  Let $F$ and $J_{\ve,\kappa}$ be as
  in~\eqref{functional_MS_weak} and~\eqref{functional_Je}, respectively.
  Assume moreover that $\kappa(\ve) = o(\ve)$ as $\ve \to 0$.
  Then we have for every sequence $\ve_j \to 0$ that
  \[
  F = \Gammalim_j J_{\ve_j,\kappa(\ve_j)}\;.
  \]
\end{theorem}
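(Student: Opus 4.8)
The plan is to verify the two conditions of Lemma~\ref{le:gammalimit}, taking $\X = L^2(\Omega)\times L^2(\Omega)$ with its norm topology, $J = F$, and $\hat{\X}$ a suitable dense class of ``nice'' configurations. For the recovery-density requirement and the dense set $\hat{\X}$, I would take functions $u$ whose jump set $S_u$ is a finite union of $C^1$ (or polygonal) arcs with $u$ smooth away from $S_u$; a standard density result for $SBV$ in the Mumford--Shah setting (see~\cite{Bra98}) guarantees that such $u$ are dense in energy, i.e. for every $u$ with $F(u,v) < \infty$ there is a sequence $u_k \to u$ in $L^2$ with $F(u_k, \cdot) \to F(u,\cdot)$. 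Here $v$ in the limit functional $F(u,v)$ plays an auxiliary role: the natural target is $v \equiv 1$ (or, more precisely, $v = 1$ a.e.), since the length term in $F$ already counts $\mathcal{H}^1(S_u)$ and the approximating $v_{Y,\kappa}$ collapse to $1$ a.e. as $\ve \to 0$ provided the total measure $\ve^2 \pi\, m_{\ve,\kappa} \to 0$ along the relevant sequences.

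First I would prove the $\liminf$ inequality~\eqref{eq:liminf}. Given $(u_j, v_j) \to (u,v)$ in $L^2\times L^2$ with $\liminf_j J_{\ve_j,\kappa(\ve_j)}(u_j,v_j) < \infty$, I may pass to a subsequence realizing the liminf and assume each $u_j \in H^1(\Omega)$, each $v_j = v_{Y_j,\kappa(\ve_j)}$ with $N_j := \mathcal{H}^0(Y_j)$ finite, and the energies uniformly bounded. The fidelity term $\tfrac12\int(u_j - f)^2$ is continuous under $L^2$ convergence, so it passes to the limit directly. The crux is to simultaneously control the gradient term $\tfrac{\alpha}{2}\int_\Omega v_j |\nabla u_j|^2$ and the counting term $2\beta\ve_j N_j$ and show they dominate, in the limit, $\tfrac{\alpha}{2}\int_{\Omega\setminus S_u}|\nabla u|^2 + \beta\mathcal{H}^1(S_u)$. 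The mechanism is the one familiar from Ambrosio--Tortorelli: where $v_j = 1$ the gradient energy forces $u_j$ to stay in $H^1$ and, by lower semicontinuity of the Dirichlet energy, controls $\int_{\Omega\setminus S_u}|\nabla u|^2$; where $v_j = \kappa(\ve_j)$ (inside the balls) the function $u_j$ is allowed to jump, and the set $\bigcup_{y\in Y_j} B_{\ve_j}(y)$ of total ``width'' roughly $2\ve_j N_j$ covers, in the limit, the jump set $S_u$, so that $2\beta\ve_j N_j$ bounds $\beta\mathcal{H}^1(S_u)$ from below. Making this rigorous is the main obstacle: one must show that the covering balls concentrate along $S_u$ and that $\liminf 2\ve_j N_j \ge \mathcal{H}^1(S_u)$. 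I would do this via a slicing/coarea argument, cutting $\Omega$ by lines in a fixed direction, counting intersections of each slice with the covered region, and invoking the one-dimensional fact that crossing a jump of $u$ along a slice forces at least one covering ball to be met (because $\kappa(\ve_j) \to 0$ kills the gradient penalty only on a set of measure controlled by $\ve_j N_j$); integrating over slices and over all directions recovers $\mathcal{H}^1(S_u)$ through the integralgeometric formula for rectifiable sets, while Ambrosio's $SBV$ closure theorem ensures $u \in SBV$ with the correct jump structure.

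Next I would establish the $\limsup$ inequality~\eqref{eq:limsup}, which by Lemma~\ref{le:gammalimit} only needs to be checked for $u \in \hat{\X}$. For such a $u$ with $S_u$ a finite union of smooth arcs of total length $\mathcal{H}^1(S_u) = \ell$, I construct an explicit recovery sequence: cover each arc by approximately $\ell/(2\ve_j)$ balls of radius $\ve_j$ laid out along the arc with centers spaced $2\ve_j$ apart, set $Y_j$ to be these centers and $v_j := v_{Y_j,\kappa(\ve_j)}$, and take $u_j := u$ (or a mollification of $u$ inside the tubular neighborhood, left unchanged as an $H^1$ function since $u$ is now smooth across the thin covered strip where $v_j = \kappa(\ve_j)$ permits large gradients at negligible cost). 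Then $v_j \to 1$ in $L^2$ because the covered region has measure $O(\ve_j^2 N_j) = O(\ve_j \ell) \to 0$; the fidelity term is unchanged; the counting term gives $2\beta\ve_j N_j \to \beta\ell = \beta\mathcal{H}^1(S_u)$ by the chosen spacing; and the gradient term converges to $\tfrac{\alpha}{2}\int_{\Omega\setminus S_u}|\nabla u|^2$ because the contribution of the thin covered tube is weighted by $\kappa(\ve_j) = o(\ve_j)$ and the tube has area $O(\ve_j)$, so the excess energy inside is $O(\kappa(\ve_j)/\ve_j) \cdot O(1) \to 0$, which is precisely where the hypothesis $\kappa(\ve) = o(\ve)$ enters. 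This yields $\limsup_j J_{\ve_j,\kappa(\ve_j)}(u_j,v_j) \le F(u,v)$, even with $\delta = 0$, completing the verification and hence the theorem by Lemma~\ref{le:gammalimit}.
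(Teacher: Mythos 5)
Your overall architecture coincides with the paper's: invoke Lemma~\ref{le:gammalimit} on $\X = L^2(\Omega)\times L^2(\Omega)$ with the dense class of $\SBV$ functions whose jump set is a finite union of nice arcs (the paper's $\mathcal{W}(\Omega)$, with energy-density from Cortesani and Cortesani--Toader), prove the lower bound by a one-dimensional analysis extended to $\R^2$ by slicing, and prove the upper bound by an explicit ball-covering recovery sequence. Your liminf part is only a plan --- you yourself flag that making the slicing/counting argument rigorous is ``the main obstacle'' --- but it is the paper's plan, including the integral-geometric recovery of $\mathcal{H}^1(S_u)$ from the sliced counting terms.

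The genuine gap is in your limsup construction. First, the primary choice $u_j := u$ is impossible: by definition~\eqref{functional_Je}, $J_{\ve,\kappa}(u,v) = +\infty$ whenever $u \notin H^1(\Omega)$, and every $u$ in your dense class with a nontrivial jump fails to be in $H^1(\Omega)$; so modifying $u$ near $S_u$ is mandatory, not optional. Second, and more seriously, with centers spaced exactly $2\ve_j$ apart, consecutive balls merely touch: the covered set pinches to a single point between adjacent balls and contains \emph{no} tube of positive width around $S_u$. Any $H^1$ replacement of $u$ must carry the $O(1)$ jump transition across $S_u$ near each pinch point, and there the transition layer either leaves the covered set (where its energy is weighted by $1$, not by $\kappa(\ve_j)$) or is squeezed into covered cusps of vanishing width, forcing gradient blow-up. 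Your estimate ``excess energy $= O(\kappa(\ve_j)/\ve_j)\cdot O(1) \to 0$'' tacitly assumes the entire transition layer sits where $v_j = \kappa(\ve_j)$, which is false for touching balls: a mollification at scale $\rho$ costs on the order of $\rho/\ve^2$ (from the uncovered cusps, weight $1$) plus $\kappa/\rho$ (from the covered strip) per unit length of $S_u$, and these cannot both vanish unless $\kappa = o(\ve^2)$ --- strictly stronger than the hypothesis $\kappa(\ve) = o(\ve)$. The paper circumvents this precisely by spacing the centers $2\ve\sqrt{1-c^2}$ apart, so that the balls cover the full tube $K(\ve,c) = \{x : \dist(x,S_u) < c\ve\}$, by cutting off $u^{(\ve,c)} = u\,\min\bigl(\dist(x,S_u)/(c\ve),1\bigr)$ inside that tube, and by accepting the factor $1/\sqrt{1-c^2}$ in the length term; this inflated bound is exactly why Lemma~\ref{le:gammalimit} is formulated with the $\delta$ slack in~\eqref{eq:limsup}, which is then removed by letting $c \to 0$. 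Your claim of obtaining the limsup inequality ``even with $\delta = 0$'' is a symptom of this oversight rather than an improvement.
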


We now prove Theorem~\ref{theorem1}
using the methods introduced by Ambrosio and Tortorelli \cite{Amb90},
as presented in the notes by Chambolle \cite{Cha00} and the books by Braides \cite{Bra98,Bra02}.
The proof is split into three parts.
In the first part, we will prove the lower bound, inequality~\eqref{eq:liminf},
in the one-dimensional case.
In the second part, we will extend this result to dimension 2 using the \emph{slicing method}
(see, e.g., \cite{Bra98, Bra02}).
In the last part, we will prove the upper bound, inequality~\eqref{eq:limsup}.

\subsection{The Lower Bound for $n=1$}

Let the set $\Omega \subset \R$ be open and bounded, and $f \in L^\infty(\Omega)$.
We define the one-dimensional Mumford--Shah functional
$\tilde{F}\colon L^2(\Omega)\times L^2(\Omega) \to [0,+\infty]$ as
\[
\tilde{F}(u,v) = \frac{1}{2} \int_\Omega (u-f)^2\,dx + \frac{\alpha}{2}\int_{\Omega\setminus S_u} (u')^2\,dx + \beta\mathcal{H}^0(S_u)
\]
if $u'$ is square integrable outside the jump set $S_u$ of $u$ and $v \equiv 1$;
otherwise $\tilde{F}(u,v)=+\infty$.

Because of technicalities of the proof of the two-dimensional case
that result from the restriction of the approximating functionals $J_{\ve,\kappa}$ to lines,
it is necessary to use a slightly different definition in the one-dimensional case;
instead of only covering the edge set
with balls of radius $\ve$, we also allow covers with smaller balls.
For each finite set $Y=\{y_i : 1\le i \le m\}$ of points in $\R$
we denote by $M_{\ve,\kappa}(Y,\Omega)$ the set of all functions $v\colon\Omega\to\R$
for which there exists a sequence
$\{\delta_i\}_{i=1}^m$ of positive numbers smaller than, or equal to, $\ve$,
such that for all $x\in \Omega$ we have
\[
v(x) =
\begin{cases}
\kappa &\text{ if } x \in \bigcup_{i=1}^{m}B_{\delta_i}(y_i),\\
1 &\text{ else.}
\end{cases}
\]
Furthermore we denote by
\begin{equation}\label{eq:tildemve}
\tilde{m}_{\ve,\kappa}(v,\Omega) := \inf\bigl\{\mathcal{H}^0(Y) : Y \subset \R,\ v \in M_{\ve,\kappa}(Y,\Omega)\bigr\}\;.
\end{equation}
As in the two-dimensional case,
we define $\tilde{m}_{\ve,\kappa}(v,\Omega):=+\infty$ if $v \not\in M_{\ve,\kappa}(Y,\Omega)$ for any $Y \subset\R^2$.

Finally, we define the functional $\tilde{J}_{\ve,\kappa}\colon L^2(\Omega)\times L^2(\Omega) \to [0,+\infty]$
as
\[
\tilde{J}_{\ve,\kappa}(u,v) := \frac{1}{2}\int_\Omega (u-f)^2\,dx
+ \frac{\alpha}{2}\int_\Omega v(u')^2\,dx + \beta \tilde{m}_{\ve,\kappa}(v,\Omega)
\]
if $u \in H^1(\Omega)$ and $v \in M_{\ve,\kappa}(Y,\Omega)$, otherwise $\tilde{J}_{\ve,\kappa}(u,v) :=+\infty$.
For proving the inequality
\[
\tilde{F}(u,v) \le \liminf_{j\to\infty} \tilde{J}_{\ve_j,\kappa(\ve_j)}(u_j,v_j)\;.
\]
for all sequences $(u,v) \in L^2(\Omega)\times L^2(\Omega)$ converging
to $(u,v) \in L^2(\Omega)\times L^2(\Omega)$ and $\ve_j \to 0$,
we can basically rely on the results and techniques 
from~\cite[Proposition~3]{BraChaSol07}, where the same result has been
shown in an only slightly different setting.
We therefore omit the proof.

\subsection{The Lower Bound for $n = 2$}

The second part of the proof of Theorem~\ref{theorem1}
is concerned with showing~\eqref{eq:liminf} for $\Omega \subset \R^2$.
The proof applies the slicing method following Braides \cite{Bra98,Bra02}.
To that end it is necessary to introduce some notational preliminaries:

We denote for every direction $\xi\in\Sp^{1}:=\{x\in \R^2 : \abs{x}=1 \}$
by $\Pi_{\xi}:=\{y\in\R^2 : \langle y, \xi \rangle = 0 \}$ the
hyperplane passing through $0$ that is orthogonal to $\xi$.
If $A \subset \Omega$ is open, we denote by
$A_{\xi,y}:=\{t\in\R : y+t\xi\in A\} \subset \R$
the one-dimensional slice of $A$ indexed by $y\in\Pi_{\xi}$.
Finally, for all $w$ defined on $\Omega$, we define the one-dimensional
function $w^{\xi,y}(t)=w(y+t\xi)$ as the restriction of $w$
to $\Omega_{\xi,y}$.
\medskip

Next we define for every open set $A \subset \Omega$
a \emph{localized} functional $J_{\ve,\kappa}(u,v,A)$.
To that end, we first localize the functional $m_{\ve,\kappa}$.
We define
\[
m_{\ve,\kappa}(v,A) :=
\inf\bigl\{\mathcal{H}^0(Y) : Y \subset \R^2,\ v|_A = v_{Y,\kappa}|_A\bigr\}\;.
\]
Then we define
\[
J_{\ve,\kappa}(u,v,A):=
 \frac{1}{2}\int_A(u-f)^2 + \alpha v \lvert\nabla u\rvert^2 \,dx +
 2 \ve \beta m_{\ve,\kappa}(v,A)\;.
\]
Moreover, we define for each $\xi \in \Sp^{1}$ the directional
functional
\[
J_{\ve,\kappa}^\xi(u,v,A):=
 \frac{1}{2}\int_A(u-f)^2 + \alpha v \langle \xi, \nabla u\rangle^2 \,dx + 2\ve\beta m_{\ve,\kappa}(v,A)\;.
 \]
Finally, we consider for each $\xi\in\Sp^{1}$, $y \in \Pi_\xi$, and
$I\subset \Omega_{\xi,y}$ open the one-dimensional functionals
\[
F^{\xi,y}(\hat{u},I) =
\frac{1}{2}\int_{I\setminus S_{\hat{u}}} (\hat{u}-f^{\xi,y})^2 + (\hat{u}')^2\,dx
+ \beta\mathcal{H}^0(S_{\hat{u}})
\]
and
\[
J_{\ve,\kappa}^{\xi,y}(\hat{u},\hat{v},I) =
 \frac{1}{2}\int_I(\hat{u}-f^{\xi,y})^2 + \alpha \hat{v} (\hat{u}')^2\,dx + \beta \tilde{m}_{\ve,\kappa}(\hat{v},I)\,,
\]
where $\tilde{m}_{\ve,\kappa}$ is as defined in~\eqref{eq:tildemve}.

We claim that for every $u$, $v \in L^2(\Omega)$, $\ve > 0$, $0 < \kappa < 1$,
$A \subset \Omega$ open, and $\xi \in \Sp^{1}$ the inequalities
\begin{equation}\label{eq:Jxi}
J_{\ve,\kappa}(u,v,A) \ge J_{\ve,\kappa}^\xi(u,v,A) \ge \int_{\Pi_\xi}
J_{\ve,\kappa}^{\xi,y}(u^{\xi,y},v^{\xi,y},A_{\xi,y})\,d\mathcal{H}^{1}(y)
\end{equation}
hold. Indeed, the first inequality is a direct consequence of the
definition of the involved functionals. For the second inequality,
note first that, by Fubini's theorem,
\[
\int_A (u-f)^2 + \alpha v \langle \xi, \nabla u\rangle^2 \,dx =
\int_{\Pi^\xi} \int_{A_{\xi,y}}(u^{\xi,y}-f^{\xi,y})^2 +
\alpha v^{\xi,y} ({u^{\xi,y}})'^2\,dt\,d\mathcal{H}^{1}(y)\;.
\]
Thus it remains to show that
\begin{equation}\label{eq:mve}
2\ve\, m_{\ve,\kappa}(v,A)
\ge \int_{\Pi_\xi} \tilde{m}_{\ve,\kappa}(v^{\xi,y},A_{\xi,y})\,d\mathcal{H}^{1}(y)
\end{equation}
whenever $v \in L^2(A)$.
In case $m_{\ve,\kappa}(v,A) = +\infty$, this inequality trivially holds.
Else, there exists a set $Y = \{y_1,\ldots,y_m\} \subset \R^2$
with $m = m_{\ve,\kappa}(v,A)$ such that
\[
v(x) =
\begin{cases}
  \kappa_\ve &\text{ if } x \in \bigcup_{i=1}^m B_{\ve}(y_i)\,,\\
  1 &\text{ else.}
\end{cases}
\]
Then
\[
\begin{aligned}
2\ve\,m
&= \sum_{i=1}^m \mathcal{H}^1\bigl(\{y \in \Pi_\xi : B_{\ve}(y_i)\cap (y+\R\xi) \neq \emptyset\}\bigr)\\
&\ge \sum_{i=1}^m \mathcal{H}^1\bigl(\{y \in \Pi_\xi : B_{\ve}(y_i)\cap (y+\R\xi) \cap A\neq\emptyset\}\bigr)\\
&= \int_{\Pi_\xi}
\mathcal{H}^0\bigl(\{i : B_{\ve}(y_i)\cap (y+\R\xi) \cap A \neq \emptyset\}\bigr)
\,d\mathcal{H}^{1}(y)\;.
\end{aligned}
\]
Moreover the definition of $\tilde{m}_{\ve,\kappa}(v^{\xi,y},A_{\xi,y})$ implies that
\[
\mathcal{H}^0\bigl(\{i : B_{\ve}(y_i)\cap A_{\xi,y} \neq \emptyset\}\bigr)
\ge \tilde{m}_{\ve,\kappa}(v^{\xi,y},A_{\xi,y})
\]
for all $y$ and $\xi$. This shows~\eqref{eq:mve}, which in turn
implies~\eqref{eq:Jxi}.
\medskip

Now let $(u,v) \in L^2(\Omega)\times L^2(\Omega)$, and assume that
$\ve_j \to 0$, $u_j \to u$, and $v_j \to v$. As in the
one-dimensional case we have to show that
\[
F(u,v) \le \liminf_{j\to\infty} J_{\ve_j,\kappa(\ve_j)}(u_j,v_j)\;.
\]
Again, we assume without loss of generality that the sequence
$J_{\ve_j,\kappa(\ve_j)}(u_j,v_j)$ converges to some finite number $c < +\infty$;
the claim being trivial if $c = +\infty$. In particular, this
implies that $v = 1$ almost everywhere. Now \eqref{eq:Jxi}, Fatou's Lemma, and the
$\Gamma$-convergence result for the one-dimensional case imply that,
for each open set $A \subset \Omega$ and each direction $\xi \in
\Sp^{1}$, we have
\begin{multline*}
\liminf_{j\to\infty} J_{\ve_j,\kappa(\ve_j)}(u_j,v_j,A)\\
\begin{aligned}
 &\ge \liminf_{j\to\infty}\int_{\Pi_\xi}
J_{\ve_j,\kappa(\ve_j)}^{\xi,y}(u_j^{\xi,y},v_j^{\xi,y},A_{\xi,y})\,d\mathcal{H}^{1}(y)\\
&\ge \int_{\Pi_\xi} \liminf_{j_\to\infty}
J_{\ve_j,\kappa(\ve_j)}^{\xi,y}(u_j^{\xi,y},v_j^{\xi,y},A_{\xi,y})\,d\mathcal{H}^{1}(y)\\
&\ge \int_{\Pi_\xi} F^{\xi,y}(u^{\xi,y},A_{\xi,y})\,d\mathcal{H}^{1}(y)\\
&= \frac{1}{2}\int_{A\setminus S_u} (u-f)^2+\alpha\langle\xi,\nabla
u\rangle^2\,dx\\
&\qquad{}+ \beta\int_{\Pi_\xi} \mathcal{H}^0(S_{u^{\xi,y}}\cap A_{\xi,y})\,d\mathcal{H}^{1}(y)\\
&= \frac{1}{2}\int_{A\setminus S_u}\!\!\!\!\! (u-f)^2+\alpha\langle\xi,\nabla
u\rangle^2\,dx + \beta\int_{S_u\cap A}\!\!\!\!\! \abs{\langle \xi,\nu_u\rangle}\,d\mathcal{H}^{1}(x)\;.
\end{aligned}
\end{multline*}
Now let $(\xi_i)_{i\in\N}\subset \Sp^{1}$ be a dense subset. Then
\cite[p.~191]{Bra98} implies that
\[
\begin{aligned}
\liminf_{j\to\infty} J_{\ve_j,\kappa(\ve_j)}(u_j,v_j) &\ge
\frac{1}{2}\int_{\Omega\setminus S_u} (f-u)^2 + \alpha \sup_i \langle
\xi_i,\nabla u\rangle^2\,dx \\
&\qquad + \int_{S_u} \sup_i\abs{\langle
\xi_i,\nu_u\rangle}\,d\mathcal{H}^{n-1}(y)\\
& = F(u)\;.
\end{aligned}
\]

\subsection{The Upper Bound}

We now prove inequality~\eqref{eq:limsup}.
To that end, we consider the set $\mathcal{W}(\Omega)$
consisting of all functions $u \in \SBV(\Omega)$ for which the following hold:
\begin{enumerate}
\item $\mathcal{H}^{1}(\overline{S}_u\setminus S_u) = 0$.
\item The set $\overline{S}_u$ is the union of a finite number of
  almost disjoint line segments contained in $\Omega$.
\item $u|_{\Omega\setminus \bar{S}_u} \in W^{1,\infty}(\Omega\setminus S_u)$.
\end{enumerate}
Obviously, this set is dense in $\SBV(\Omega)$
with respect to the $L^2$-topology.
Moreover, it has been shown in~\cite{Cor97,CorToa99}
that, for every $u \in \SBV(\Omega)$, there exists a sequence $u_j \to u$
with $u_j \in \mathcal{W}(\Omega)$ for every $j$ such that $F(u_j) \to F(u)$.
Using Lemma~\ref{le:gammalimit},
for proving~\eqref{eq:limsup},
we therefore have to find for every $u \in \mathcal{W}(\Omega)$, $\delta > 0$, and $\ve_j \to 0$
sequences $u_j \to 0$, $v_j \to 1$ as $j\to\infty$, such that
$\limsup_{j\to\infty} J_{\ve_j,\kappa(\ve_j)}(u_j,v_j) \le F(u)+\delta$.

Let therefore $u \in \mathcal{W}(\Omega)$ be fixed.
By definition of $\mathcal{W}(\Omega)$,
there exist $k \in \N$ and $a_i$, $b_i \in \Omega$, $1 \le i \le k$,
such that $\overline{S}_u = \bigcup_{i=1}^k [a_i,b_i]$.
Moreover,
\[
\mathcal{H}^1(S_u) = \mathcal{H}^1(\overline{S}_u) = \sum_{i=1}^k \norm{b_i-a_i}\;.
\]

Now define for $\ve > 0$ and $0 < c < 1$
\[
K(\ve,c) := \bigl\{x \in \Omega : \dist(x,S_u) < c\ve\bigr\}\;.
\]
Let moreover $\mu(\ve,c) \in \N$ and $x_l^{(\ve,c)} \in \Omega$, $1 \le l \le \mu(\ve,c)$,
be such that
\[
K(\ve,c) \subset \bigcup_{l=1}^{\mu(\ve,c)} B_\ve(x_l^{(\ve,c)})\;.
\]
Now note that, if we place the centers of the balls on a line segment $[a_i,b_i]$,
then they cover the whole set $\bigl\{x \in \Omega : \dist(x,[a_i,b_i]) < c\ve\bigr\}$
provided that the distance between two adjacent centers is
at most $2\ve\sqrt{1-c^2}$.
Thus it follows that one can cover each set
$\bigl\{x \in \Omega : \dist(x,[a_i,b_i]) < c\ve\bigr\}$
with at most $\frac{\norm{b_i-a_i}}{2\ve\sqrt{1-c^2}} + 1$ balls of radius $\ve$.
Consequently, we can choose the centers $x_l$ in such a way that
\[
\mu(\ve,c)
\le k + \sum_{i=1}^k \frac{\norm{b_i-a_i}}{2\ve\sqrt{1-c^2}}
= k + \frac{\mathcal{H}^1(S_u)}{2\ve\sqrt{1-c^2}}\;.
\]
Let now
\[
v^{(\ve,c)}(x) :=
\begin{cases}
\kappa(\ve) &\text{if } x \in \bigcup_{l=1}^{\mu(\ve,c)} B_\ve(x_l^{(\ve,c)})\,,\\
1 & \text{else.}
\end{cases}
\]
Then, for every $c$ we have $v^{(\ve,c)} \to 1$ as $\ve \to 0$.
Moreover,
\[
m_{\ve,\kappa(\ve)}(v^{(\ve,c)},\Omega) \le \mu(\ve,c)\;.
\]

Define now
\[
u^{(\ve,c)}(x):=u(x) \, \min \left(\frac{\text{dist}(x,S_u)}{c\ve},1 \right)\;.
\]
Then $u^{(\ve,c)}(x) = u(x)$ for $x \not\in K(\ve,c)$ and $u^{(\ve,c)} \to u$ as $\ve \to 0$.
Denoting $d(x)=\dist(x,S_u)$, we have for almost every $x \in K(\ve,c)$
\[
\nabla u^{(\ve,c)}(x) = \nabla u(x) \frac{d(x)}{c\ve} + u(x) \frac{\nabla d(x)}{c\ve} \;.
\]
Thus the triangle inequality and the fact that $\abs{\nabla d(x)} = 1$
almost everywhere imply that
\[
\abs{\nabla u^{(\ve,c)}(x)}
\leq  \abs{\nabla u(x)} \frac {d(x)}{c\ve}+ \abs{u(x)} \frac{\abs{\nabla d(x)}}{c\ve}
\leq \abs{\nabla u(x)} + \frac{\norm{u}_{\infty}}{c\ve}
\]
for almost every $x \in K(\ve,c)$.
Therefore, for almost every $x \in K(\ve,c)$,
\begin{equation}
\label{uenorm2}
\abs{\nabla u^{(\ve,c)}(x)}^2 \leq 2 \norm{\nabla u}_\infty^2 + 2  \frac{\norm{u}_{\infty}^2}{c^2\ve^2} \;.
\end{equation}

Now consider each term of the functional $J_{\ve,\kappa(\ve)}$ separately.
We have
\begin{multline*}
\int_{\Omega} (u^{(\ve,c)}-f)^2 dx
= \int_{\Omega\setminus K(\ve,c)}(u-f)^2 dx
+ \int_{ K(\ve,c)}(u^{(\ve,c)}-f)^2 dx \\
\to_{\ve \to 0} \int_\Omega (u - f)^2 dx\;.
\end{multline*}
From \eqref{uenorm2} we get
\begin{multline*}
\int_{\Omega} v^{(\ve,c)} \abs{\nabla u^{(\ve,c)}}^2 dx
\le \int_{\Omega\setminus K(\ve,c)} \abs{\nabla u}^2 dx
+ \int_{K(\ve,c)} \kappa(\ve) \abs{\nabla u^{(\ve,c)}}^2 dx\\
\le \int_{\Omega\setminus K(\ve,c)} \abs{\nabla u}^2 dx
+ 2\kappa(\ve) \norm{\nabla u}_\infty^2 \mathcal{L}^2(K(\ve,c))
+ \frac{2\kappa(\ve) \norm{u}_\infty^2 \mathcal{L}^2(K(\ve,c))}{c^2\ve^2}\;.
\end{multline*}
Because $\kappa(\ve) = o(\ve)$ as $\ve \to 0$ and
\[
\mathcal{L}^2(K(\ve,c))
\le \mathcal{L}^2(K(\ve,1))
\le 2\ve\mathcal{H}^1(S_u) + k\pi\ve^2
= O(\ve)
\qquad
\text{ as } \ve \to 0\,,
\]
it follows that
\[
\limsup_{\ve \to 0} \int_{\Omega} v^{(\ve,c)} \abs{\nabla u^{(\ve,c)}}^2 dx
\le \int_{\Omega\setminus S_u} \abs{\nabla u}^2\,dx\;.
\]
Finally, the construction of $v^{(\ve,c)}$ implies that
\[
2\ve\, m_{\ve,\kappa(\ve)}(v^{(\ve,c)},\Omega)
\le 2k\ve + \frac{\mathcal{H}^1(S_u)}{\sqrt{1-c^2}}\;.
\]

Let now $\ve_j \to 0$ as $j\to \infty$ and define $u_j := u^{(\ve_j,c)}$, $v_j := v^{(\ve_j,c)}$.
Then it follows that
\[
\limsup_{j \to \infty} J_{\ve_j,\kappa(\ve_j)} (u_j,v_j)
\le \frac{1}{2}\int_\Omega (u-f)^2\,dx + \frac{\alpha}{2} \int_{\Omega\setminus S_u}\abs{\nabla u}^2\,dx
+ \frac{\beta}{\sqrt{1-c^2}}\mathcal{H}^1(S_u)\;.
\]
Since $0 < c < 1$ was arbitrary and $\mathcal{H}^1(S_u) < \infty$,
we obtain~\eqref{eq:limsup} with $\delta = (1-1/\sqrt{1-c^2})\mathcal{H}^1(S_u)$,
which concludes the proof of Theorem~\ref{theorem1}.


\section{Numerical Implementation}
\label{sec:algorithm}

\subsection{Proposed Algorithm}

Based on Theorem~\ref{thm:Gasymptotic}, we propose the following algorithm
for the approximate minimization of the functional $J_{\ve,\kappa}$
for fixed $\ve > 0$ and $\kappa > 0$.

\begin{algorithm}\label{alg}
Let $f \in L^\infty(\Omega)$, $\alpha$, $\beta > 0$, $\ve > 0$,
and $0 < \kappa < 1$ be given.

Set $k = 0$ and $K_0 := \emptyset$.
\begin{enumerate}
\item Define
  \[
  v_k(x) := \begin{cases}
    \kappa & \text{ if } x \in K_k\,,\\
    1 & \text{ if } x \in \Omega \setminus K_k\;.
  \end{cases}
  \]
\item Define
  \[
  u_k := \argmin_u G(u,v_k)\;.
  \]
\item Find $y \in \Omega\setminus K_k$ such that
  $\abs{\nabla u_k(y)}$ is maximal.
\item If
  \begin{equation}\label{eq:alg_cond}
  \dfrac{\alpha}{2}\,\pi\,\frac{1-\kappa}{1+\kappa} \abs{\nabla u_k(y)}^2 < \frac{\beta}{\ve}\,,
  \end{equation}
  stop.

  Else set $K_{k+1} := K_k \cup B_\ve(y)$,
  increase $k$ by one and go to step 1.
\end{enumerate}
\end{algorithm}

Steps 3 and 4 of the algorithm use the results of Theorem~\ref{thm:Gasymptotic}.
This theorem states that adding in the $k$-th step a point $y$ to the edge indicator
will approximately result in a decrease of the functional $G$
by approximately $\ve^2\alpha\pi(1+\kappa)\abs{\nabla u_k(y)}^2/(1-\kappa)$.
Thus, we will obtain the steepest descent, if we add a point $y$,
where $\abs{\nabla u_k(y)}$ is maximal.
At the same time the adding of another ball leads to an increase
of the term $m_{\ve,\kappa}$ by $2\beta\ve$.
In total, the value of $J_{\ve,\kappa}$ will increase if~\eqref{eq:alg_cond}
holds, else $J_{\ve,\kappa}$ will decrease and therefore it makes
sense to include the point $y$ into the edge set.

\begin{remark}
  In order to increase the performance of the algorithm,
  it makes sense to add not just one ball in each iteration,
  but rather several ones.
  Also in this case a similar approximation
  as Theorem~\ref{thm:Gasymptotic} holds, and thus the same
  criterion for adding new points can be applied.
  This strategy has been used in the numerical examples below.
\end{remark}


\subsection{Numerical Results}

We now compare the results obtained with Algorithm~\ref{alg}
with results obtained using
the approximation introduced by Ambrosio and Tortorelli~\cite{AmbTor90}.
This latter method consists in minimizing the functional
\begin{equation}\label{eq:AT}
I_\ve(u,v) :=
\frac{1}{2}\int_\Omega (u-f)^2\,dx + \frac{\alpha}{2}\int_\Omega v^2\abs{\nabla u}^2\,dx
+ \frac{1}{2}\int_\Omega \Bigl(\ve \abs{\nabla u}^2 + \frac{1}{4\ve} (v-1)^2\Bigr)\,dx\;.
\end{equation}
Again, the function $v$ serves as an edge indicator in the sense
that the points where $v$ is close to zero are an approximation
of the edge set $K$ of the solution of the Mumford--Shah functional.
In contrast to the approximation by means of the functional $J_{\ve,\kappa}$,
however, where the edge set is given as the points where the function $v$ is equal to $\kappa$,
in case of the functional~\eqref{eq:AT} one has to
threshold $v$ in order to obtain a precisely defined edge set.

The minimization of $I_\ve$ has been carried out by
alternately solving the corresponding Euler--Lagrange equations
with respect to $u$ and $v$.
For the discretization, we have used a finite element approach
with piecewise bilinear basis functions on the pixel grid.
The same discretization has been used for the computation of $u_k$
in the second step of Algorithm~\ref{alg}.

Figures~\ref{fi:boats} and~\ref{fi:peppers} show a comparison
of the results of the Ambrosio--Tortorelli approximation
and Algorithm~\ref{alg}.
The edge indicators are in both cases comparable,
though our algorithm in general classifies more points as edges.
The main difference between the results is that
the Ambrosio--Tortorelli approximation leads to a diffuse
edge indicator, while our method produces well defined edges.
As a consequence, also the smoothed images tend
to be less blurred;
compare, for instance, the various light reflections
in Figure~\ref{fi:peppers}.

\begin{figure}
\[
\begin{aligned}
&\includegraphics[width=0.46\textwidth]{./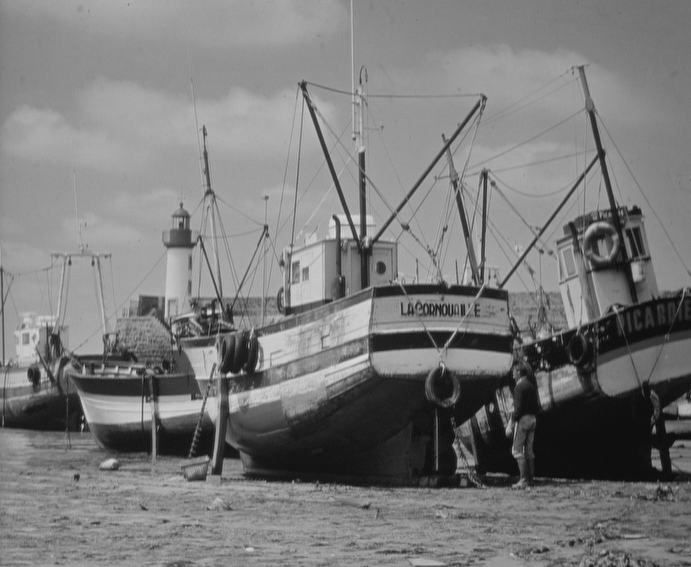} &&
&\includegraphics[width=0.46\textwidth]{./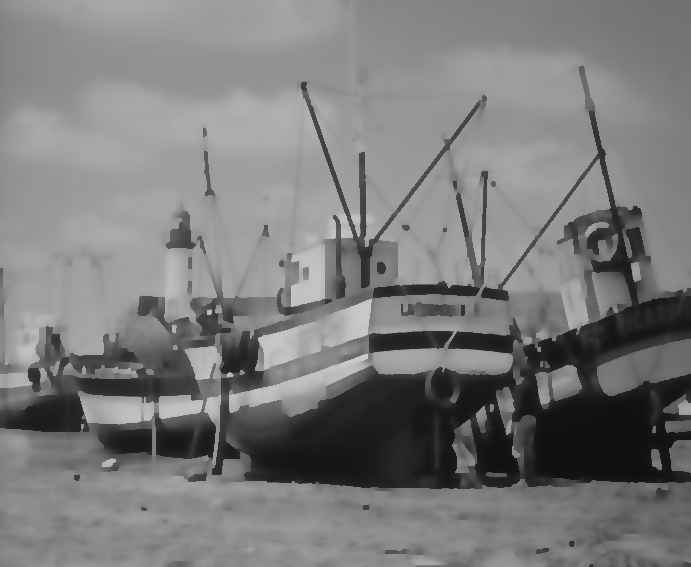} \\
&\includegraphics[width=0.46\textwidth]{./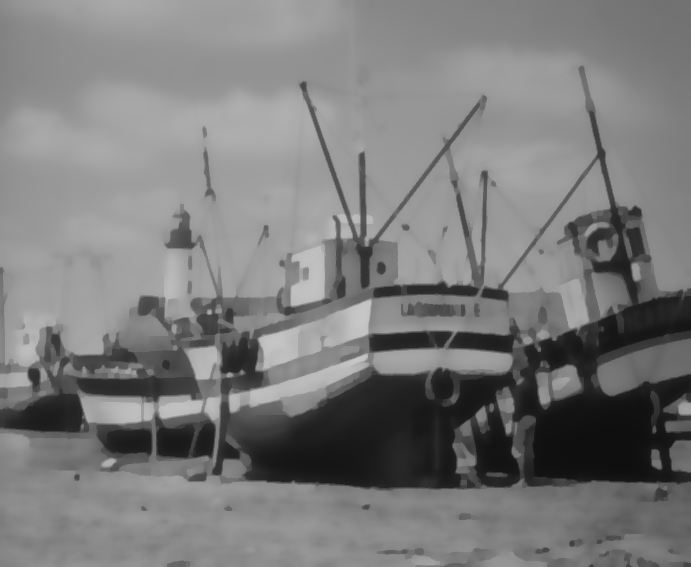} &&
&\includegraphics[width=0.46\textwidth]{./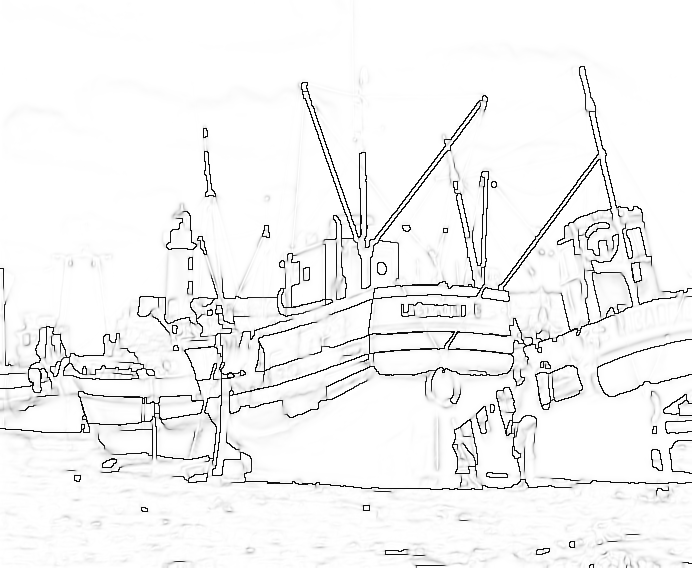} \\
&\includegraphics[width=0.46\textwidth]{./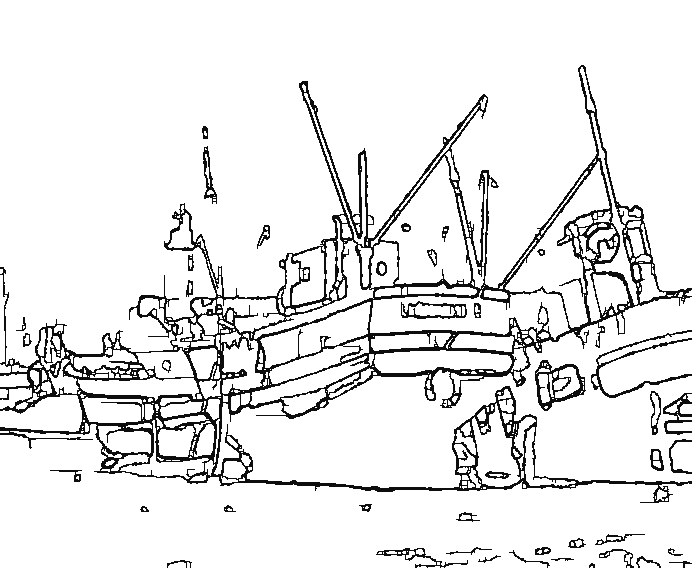} &&
&\includegraphics[width=0.46\textwidth]{./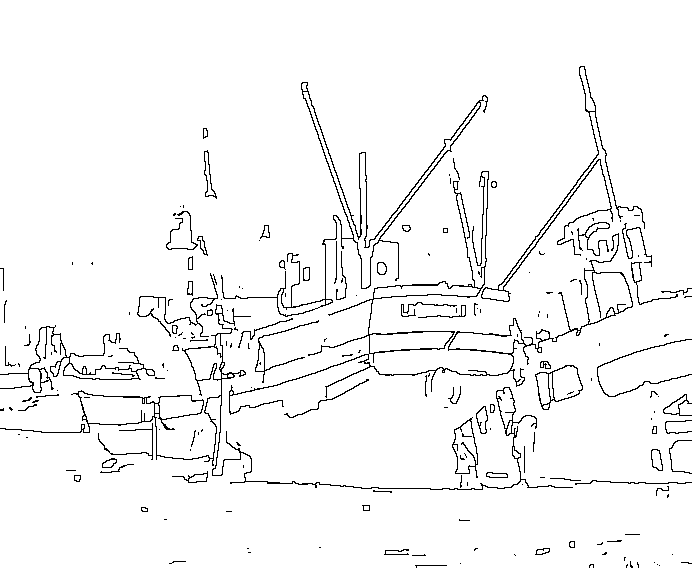}
\end{aligned}
\]
\caption{\label{fi:boats}
\emph{Upper left:} Original image.
\emph{Upper right:} Smoothing with the Ambrosio--Tortorelli Approximation.
\emph{Middle left:} Smoothing with Algorithm~\ref{alg}.
\emph{Middle right:} Edge indicator for the Ambrosio--Tortorelli Approximation.
\emph{Lower left:} Edge indicator for Algorithm~\ref{alg}.
\emph{Lower right:} Edge indicator for the Ambrosio--Tortorelli Approximation,
thresholded at 0.8.
For both methods, $\alpha = 20$, $\beta = 200$, $\ve = 0.05$.
}
\end{figure}

\begin{figure}
\[
\begin{aligned}
&\includegraphics[width=0.40\textwidth]{./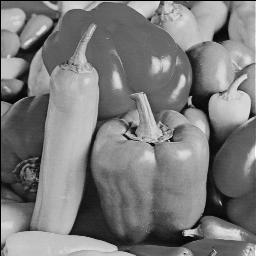} &&
&\includegraphics[width=0.40\textwidth]{./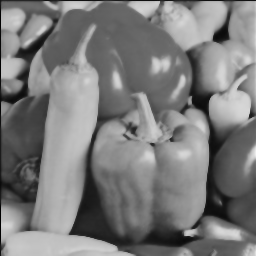} \\
&\includegraphics[width=0.40\textwidth]{./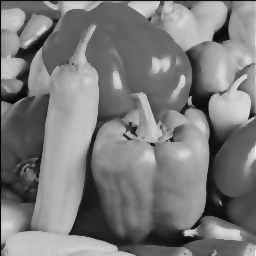} &&
&\includegraphics[width=0.40\textwidth]{./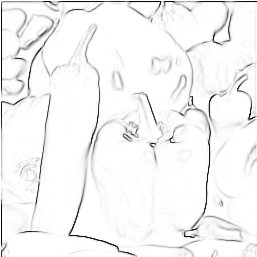} \\
&\includegraphics[width=0.40\textwidth]{./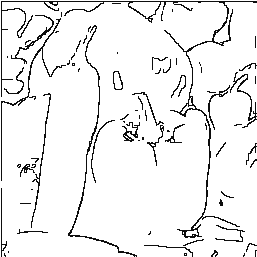} &&
&\includegraphics[width=0.40\textwidth]{./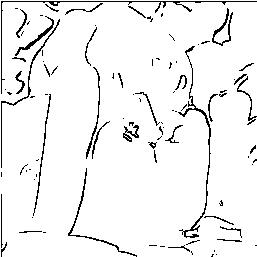}
\end{aligned}
\]
\caption{\label{fi:peppers}
\emph{Upper left:} Original image.
\emph{Upper right:} Smoothing with the Ambrosio--Tortorelli Approximation.
\emph{Middle left:} Smoothing with Algorithm~\ref{alg}.
\emph{Middle right:} Edge indicator for the Ambrosio--Tortorelli Approximation.
\emph{Lower left:} Edge indicator for Algorithm~\ref{alg}.
\emph{Lower right:} Edge indicator for the Ambrosio--Tortorelli Approximation,
thresholded at 0.9.
For both methods, $\alpha = 1$, $\beta = 500$, $\ve = 0.05$.
}
\end{figure}


\section{Conclusion}

The results of this paper provide a theoretical connection between
the Mumford--Shah functional and techniques from topological asymptotic analysis
that have recently been applied to imaging problems like edge detection.
We have shown that the Mumford--Shah functional can be approximated,
in the sense of $\Gamma$-limits, by a family of set functions that
count the number of balls that are necessary to cover the edge set
of an image. The placement of these balls can then be determined
by an asymptotic expansion of this set function with respect
to the radii of the balls.

Apart from providing yet another method for image smoothing
and segmentation, our results indicate that all the proposed algorithms
using topological asymptotic analysis are somehow related to a classical 
variational method by means of $\Gamma$-convergence. For the method based
on the function $J_{\ve,\kappa}$ defined in~\eqref{functional_Je},
the relation has been proven explicitly, but similar relations are
expected to hold for other methods. For instance, the algorithm proposed 
in~\cite{Aur09} for image segmentation should rightly be regarded as an 
implementation of the Chan--Vese model~\cite{ChaVes01} without making use
of level set methods.


\section*{Acknowledgement}

The work of OS has been supported by the Austrian Science Fund (FWF)
within the national research networks Industrial Geometry, project 9203-N12,
and Photoacoustic Imaging in Biology and Medicine, project S10505-N20.
The authors thank Prof.~Helmut Neunzert for his continuous encouragement
of this collaboration.



\end{document}